\setlist{nolistsep}
\newtheorem{theorem}{Theorem}[section]
\newtheorem{lemma}[theorem]{Lemma}
\newtheorem{proposition}[theorem]{Proposition}
\theoremstyle{definition}
\newcommand{\iin}{\,\mathord\in\,}
\newcommand{\abs}[1]{\lvert#1\rvert}
\newcommand{\abslarge}[1]{\left\lvert#1\right\rvert}
\newcommand{\norm}[1]{\lVert#1\rVert}
\newcommand{\up}{\mathsf{UP}}
\newcommand{\ueb}{\mathsf{UEB}}
\newcommand{\ue}{\mathsf{UE}}
\newcommand{\UMeas}{\mathsf{{\mathfrak{M}}_u}}
\newcommand{\FMeas}{\mathsf{{\mathfrak{M}}_F}}
\newcommand{\Mol}{\mathsf{Mol}}
\newcommand{\Ub}{\mathsf{U_b}}
\newcommand{\Uall}{\mathsf{U}}
\newcommand{\psm}{\Delta}
\newcommand{\psmtil}{\widetilde{\psm}}
\newcommand{\psmfun}[2]{#1^\bullet\!#2}
\newcommand{\pset}{\mathcal{P}}
\newcommand{\hypers}{\mathsf{H}}
\newcommand{\rightu}{\mathsf{u_R}}
\newcommand{\leftu}{\mathsf{u_L}}
\newcommand{\upu}{\mathsf{u_\vee}}
\newcommand{\lowu}{\mathsf{u_\wedge}}
\newcommand{\genu}[1]{\mathsf{u_{#1}}}
\newcommand{\initL}{\iota_{\vee L}}
\newcommand{\initR}{\iota_{\vee R}}
\newcommand{\finL}{\iota_{L\wedge}}
\newcommand{\finR}{\iota_{R\wedge}}
\newcommand{\initfin}{\iota_{\vee\wedge}}
\newcommand{\extm}[1]{\overline{#1}}
\newcommand{\real}{\mathbb{R}}
\newcommand{\compln}[1]{\widehat{#1}}
\newcommand{\mmeas}{\mathfrak{m}}
\newcommand{\nmeas}{\mathfrak{n}}
\newcommand{\pmass}{\partial}
\title{An embedding theorem for uniform measures \\
       on topological groups}
\author{Jan Pachl       \\
Toronto, Ontario, Canada}
\date{June 5, 2023}
\begin{document}

\maketitle

\begin{abstract}
The embedding theorem of Roelcke and Dierolf for the completions
of four standard uniform structures on topological groups and their quotients
holds more generally for spaces of uniform measures.
The natural mappings between the four spaces of uniform measures
on a topological group are injective and their restrictions
to positive cones are topological embeddings.
The same holds for spaces of uniform measures on the quotient
of a topological group by a neutral subgroup.
\end{abstract}

%%%%%%%%%%%%%%%%%%%%%%%%%%%%%%%%%%%%%%%%%%%%%%%%%%%%%%%%%%%%%%%%%%%%

\section{Summary}
    \label{sec:summary}

In their study of uniform structures on topological groups and their
quotients, Roelcke and Dierolf~\cite{Roelcke1981ust}
identify several situations in which the natural mapping
between completions of two uniform spaces is a topological embedding.
They establish conditions for that to occur, and in particular prove
that for every topological group $G$ the mappings
in the following commutative diagram are topological embeddings.
Here $\leftu G$, $\rightu G$, $\upu G$ and $\lowu G$
are $G$ with its left, right, upper and lower uniform structures.
The identity mappings
$\upu G \to \leftu G$,
$\upu G \to \lowu G$,
$\upu G \to \rightu G$,
$\leftu G \to \lowu G$
and $\rightu G \to \lowu G$,
being uniformly continuous,
uniquely extend to uniformly continuous mappings between completions
of the four uniform spaces.

\begin{picture}(200,90)(-80,50)
\thicklines
\put(-90,80){\makebox(30,20){(1)}}

\put(100,110){\makebox(30,20){$\compln{\upu G}$}}
\put(50,80){\makebox(30,20){$\compln{\leftu G}$}}
\put(150,80){\makebox(30,20){$\compln{\rightu G}$}}
\put(100,50){\makebox(30,20){$\compln{\lowu G}$}}

\put(100,115){\vector(-2,-1){30}}
\put(130,115){\vector(2,-1){30}}
\put(160,80){\vector(-2,-1){30}}
\put(70,80){\vector(2,-1){30}}
\put(115,110){\vector(0,-1){40}}
\end{picture}

A special case of the embedding theorem
of Roelcke and Dierolf~\cite[10.9]{Roelcke1981ust}
states that the five mappings in the commutative diagram (1)
are topological embedding,
and that the image of $\compln{\upu G}$ in $\compln{\lowu G}$
is the intersection of the images of $\compln{\leftu G}$
and $\compln{\rightu G}$.

Every uniform space $X$ embeds into a ``linear completion'' of $X$,
the topological vector space $\UMeas(X)$
of so called \emph{uniform measures}~\cite[Ch.6]{Pachl2013usm}.
The completion $\compln{X}$ of $X$ naturally identifies
with a subset of the positive cone $\UMeas(X)^+$ of $\UMeas(X)$,
so that $X\subseteq\compln{X}\subseteq\UMeas(X)^+\subseteq\UMeas(X)$.
Moreover, every uniformly continuous mapping $X\to Y$
uniquely extends to a continuous linear mapping
$\UMeas(X) \to \UMeas(Y)$,
which maps $\compln{X}$ into $\compln{Y}$
and $\UMeas(X)^+$ into $\UMeas(Y)^+$.
Hence the mappings in (1) are restrictions of those in the
following diagram.

\begin{picture}(200,80)(-80,50)
\thicklines
\put(-90,80){\makebox(30,20){(2)}}

\put(100,110){\makebox(30,20){$\UMeas(\upu G)$}}
\put(50,80){\makebox(30,20){$\UMeas(\leftu G)$}}
\put(150,80){\makebox(30,20){$\UMeas(\rightu G)$}}
\put(100,50){\makebox(30,20){$\UMeas(\lowu G)$}}

\put(90,115){\vector(-2,-1){30}}
\put(135,115){\vector(2,-1){30}}
\put(165,80){\vector(-2,-1){30}}
\put(60,80){\vector(2,-1){30}}
\put(115,110){\vector(0,-1){40}}
\end{picture}

The current paper extends the embedding theorem to the larger spaces
in diagram (2).
By Theorem~\ref{th:embedgroup} in section~\ref{sec:embgroup},
the mappings in (2) are injective
and their restrictions to the positive cones $\UMeas(\cdot)^+$
are topological embedding,
and the image of $\UMeas(\upu G)$ in $\UMeas(\lowu G)$
is the intersection of the images of $\UMeas(\leftu G)$
and $\UMeas(\rightu G)$.
This strengthens the embedding theorem for the mappings in (1).

Note that if $\leftu G \neq \rightu G$ then the mappings in (2) themselves,
although continuous and injective, are not topological embeddings.
In fact, a topological embedding of one topological vector space
into another is a uniform embedding for the additive uniform structures,
and each $\genu{\bullet} G$ is a uniform subspace of
$\UMeas(\genu{\bullet} G)$.

Diagram (1) is a special case of a more general situation in which
the quotient space $Q=G/{H}$ takes the place of $G$.

\begin{picture}(200,90)(-40,50)
\thicklines
\put(-90,80){\makebox(100,20){(3)}}

\put(100,110){\makebox(30,20){$\compln{Q_\vee}$}}
\put(50,80){\makebox(30,20){$\compln{\leftu G/{H}}$}}
\put(150,80){\makebox(30,20){$\compln{\rightu G/{H}}$}}
\put(100,50){\makebox(30,20){$\compln{Q_\wedge}$}}

\put(100,115){\vector(-2,-1){30}}
\put(130,115){\vector(2,-1){30}}
\put(160,80){\vector(-2,-1){30}}
\put(70,80){\vector(2,-1){30}}
\put(115,110){\vector(0,-1){40}}

\put(270,110){\makebox(30,20){$Q_\vee=(\leftu G/{H})\vee (\rightu G/{H})$}}
\put(270,95){\makebox(30,20){$Q_\wedge=(\leftu G/{H})\wedge (\rightu G/{H})$}}
\end{picture} \\
When ${H}$ is the trivial one-element subgroup,
this is the special case in diagram (1).
Under the necessary assumption that ${H}$ is a neutral subgroup of $G$,
the Roelcke--Dierolf embedding theorem~\cite[10.9]{Roelcke1981ust}
states that the five mappings in the commutative diagram (3)
are topological embedding,
and that the image of $\compln{Q_\vee}$ in $\compln{Q_\wedge}$
is the intersection of the images of $\compln{\leftu G/{H}}$
and $\compln{\rightu G/{H}}$.

As in the special case above,
the mappings in (3) are restrictions of those between the spaces
$\UMeas(\cdot)$.

\begin{picture}(200,80)(-40,50)
\thicklines
\put(-90,80){\makebox(100,20){(4)}}

\put(100,110){\makebox(30,20){$\UMeas(Q_\vee)$}}
\put(40,80){\makebox(30,20){$\UMeas(\leftu G/{H})$}}
\put(156,80){\makebox(30,20){$\UMeas(\rightu G/{H})$}}
\put(100,50){\makebox(30,20){$\UMeas(Q_\wedge)$}}

\put(90,115){\vector(-2,-1){30}}
\put(135,115){\vector(2,-1){30}}
\put(165,80){\vector(-2,-1){30}}
\put(60,80){\vector(2,-1){30}}
\put(115,110){\vector(0,-1){40}}
\end{picture} \\
Section~\ref{sec:embquot} deals with this more general situation.
Again the result is an embedding theorem that
strengthens the embedding theorem for the mappings in (3).

Embedding theorems in section~\ref{sec:embgroup} and~\ref{sec:embquot}
are instances of a general result in section~\ref{sec:general}.
Roelcke and Dierolf also derive their embedding theorem from a general result,
formulated in terms of uniform entourages and Cauchy filters.
The approach in the current paper is somewhat different,
using properties of continuous pseudometrics.

When $X$ is a complete metric space, $\UMeas(X)$ identifies with
the space of finite Radon measures on $X$~\cite[5.28]{Pachl2013usm}.
Hence for metrizable groups it is easy to derive
the embedding theorem for (2) directly from the embedding theorem for (1),
and the one for (4) from that for (3).
Whether the same can be done for general non-metrizable groups is not clear.
I do not know a direct proof of the embedding theorem for (2)
from that for (1), nor (4) from (3), for general topological groups.

Besides $\UMeas(X)$,
another candidate for the role of ``linear completion''
of a uniform space $X$ is the space $\FMeas(X)$
of \emph{free uniform measures}~\cite[Ch.10]{Pachl2013usm},
i.e. the free complete locally convex space over $X$.
In section~\ref{sec:freeunif} I point out that the embedding theorem
fails for $\FMeas$ in place of $\UMeas$.
When $\FMeas$ replaces $\UMeas$, the mappings in (2) and (4),
although injective, are not necessarily topological embeddings.

%%%%%%%%%%%%%%%%%%%%%%%%%%%%%%%%%%%%%%%%%%%%%%%%%%%%%%%%%%%%%%%%%%%%%%%%%%

\section{Preliminaries}

Throughout the paper,
notation and terminology are as in~\cite{Pachl2013usm},
with some additions noted here.
Spaces and groups are by definition non-empty.
Functions are real-valued,
and linear spaces are over the field $\real$ of reals.

When $T_0$ and $T_1$ are topological spaces,
a mapping $\varphi\colon T_0 \to T_1$ is a \emph{topological embedding} iff
it is a homeomorphism between $T_0$ and the subspace $\varphi(T_0)$
of $T_1$.

\subsection{Uniform spaces and pseudometrics}

For the key results in this paper nothing is lost by assuming
that topological groups and uniform spaces are Hausdorff (a.k.a separated).
However, in dealing with quotient spaces it is hard to avoid
non-Hausdorff structures.
Following Isbell~\cite{Isbell1964us},
I use the term \emph{preuniformity} (or \emph{preuniform structure}) for
what is a \emph{uniformity} in the terminology of~\cite{Roelcke1981ust};
that is, a structure that is not necessarily Hausdorff.
I reserve the term \emph{uniformity} for a Hausdorff preuniformity.
A \emph{preuniform space} is a set $S$ with a preuniformity on $S$,
and a \emph{uniform space} is a set $S$ with a uniformity on $S$.

A preuniformity is defined by its set of uniformly continuous pseudometrics.
When $X$ is a preuniform space,
$\up(X)$ denotes the set of all its uniformly continuous pseudometrics.

For any set $\pset$ of pseudometrics on a set $S$,
the \emph{preuniformity (projectively) induced} by $\pset$
is the coarsest preuniformity on $S$ for which all pseudometrics in $\pset$
are uniformly continuous.
We also say that $S$ with that preuniformity is the
\emph{preuniform space induced} by $\pset$.
It is a uniform space if and only if $\pset$ separates the points of $S$.

For two preuniform spaces $X$ and $Y$ on the same set $S$,
the \emph{supremum} $X\vee Y$ of $X$ and $Y$ is the coarsest
preuniform space on $S$ that is finer than both $X$ and $Y$.
The \emph{infimum} $X\wedge Y$ of $X$ and $Y$ is the finest
preuniform space on $S$ that is coarser than both $X$ and $Y$.

The preuniformity of $X\vee Y$ is induced by the set $\up(X)\cup\up(Y)$
of pseudometrics on $S$,
and $\up(X\wedge Y)=\up(X)\cap\up(Y)$.
A mapping from any preuniform space to $X\vee Y$ is uniformly continuous
if and only if it is uniformly continuous as a mapping to $X$ and to $Y$.
A mapping from $X\wedge Y$ to any preuniform space is uniformly continuous
if and only if it is uniformly continuous as a mapping from $X$ and from $Y$.

For a pseudometric $\psm$ on a set $S$ and $x,z\iin S$,
write $\psmfun{z}{\psm}(x):=\psm(x,z)$.
That defines a real-valued function $\psmfun{z}{\psm}$ on $S$.
If $X$ is a preuniform space and $\psm\iin\up(X)$ then $\psmfun{z}{\psm}$
is uniformly continuous on $X$ because it is $\psm$-Lipschitz.

For a pseudometric $\psm$ on a set $S$, $x\iin S$ and
non-empty subsets $A,B$ of $S$, define
\begin{align*}
\psm(A,B) & := \inf \{ \psm(a,b) \mid a\iin A, b \iin B \} \\
\psm(x,A) & := \psm(A,x) := \psm(A,\{x\})                  \\
\psmtil(A,B)
          & := \max \left( \sup_{a\in A} \psm(a,B), \sup_{b\iin B} \psm(A,b)
                    \right)
\end{align*}
If $\psm$ is bounded then $\psmtil(A,B)$ is finite.
In that case $\psmtil$ is a pseudometric on the set of
non-empty subsets of $S$.

In the notation of~\cite[\S 4.2]{Pachl2013usm},
$\psmtil$ is $\psm^\hypers$.
For a uniform space $X$, pseudometrics $\psmtil$ define
the \emph{hyperspace uniformity}~\cite[Ch.II]{Isbell1964us},
also known as the \emph{Hausdorff uniformity}~\cite[Ch.5]{Roelcke1981ust},
on the set of non-empty closed subsets of $X$.

\subsection{Topological groups}

Topological groups are assumed to be Hausdorff.
For a topological group $G$,
the identity element of $G$ is denoted by $e_G$.
The uniform spaces $\rightu G$, $\leftu G$, $\upu G$ and $\lowu G$
on the set $G$ are defined as follows.
Since $G$ is Hausdorff,
$\rightu G$, $\leftu G$, $\upu G$ and $\lowu G$ are in fact uniform,
not merely preuniform, spaces.
\begin{itemize}
\item
The \emph{right uniformity} of $G$
is induced by the set of all (equivalently, all bounded)
right invariant continuous pseudometrics on $G$.
The uniform space $\rightu G$
($\mathsf{r} G$ in the notation of~\cite{Pachl2013usm})
is the set $G$ with the right uniformity.
\item
The \emph{left uniformity} of $G$
is induced by the set of all (equivalently, all bounded)
left invariant continuous pseudometrics on $G$.
The uniform space $\leftu G$ is the set $G$ with the left uniformity.
\item
The uniformity of $\upu G:=\leftu G \vee \rightu G$
is the \emph{upper uniformity}.
\item
The uniformity of $\lowu G:= \leftu G \wedge \rightu G$ is
the \emph{lower} or \emph{Roelcke uniformity},
\end{itemize}
The upper uniformity is induced by the set of all
(equivalently, all bounded)
continuous pseudometrics on $G$ that are right or left invariant.
Note however that in general the lower uniformity is not induced
by a set of pseudometrics that are both right and left invariant.

A subgroup ${H}$ of a topological group $G$ is \emph{neutral} iff for every
neighbourhood $U$ of $e_G$ there exists a neighbourhood $V$ of $e_G$
such that ${H}V \subseteq U{H}$.

When ${H}$ is a closed subgroup of a topological group $G$,
denote by $G/{H}$ the set of left cosets $x{H}$, $x\iin G$,
and by $q$ the quotient mapping $G\to G/{H}$.
Structures on $G$ yield the corresponding \emph{final structures} on $G/{H}$:
\begin{itemize}
\item
The \emph{quotient topology} is the finest topology on the set $G/{H}$
for which the mapping $q$ is continuous.
\item
When $\alpha$ is one of $R$, $L$, $\vee$ and $\wedge$,
the corresponding \emph{quotient preuniformity} is the finest
preuniformity on the set $G/{H}$ for which the mapping $q$
is uniformly continuous from $\genu{\alpha} G$ to $G/{H}$.
The preuniform space $\genu{\alpha} G/{H}$ is the set $G/{H}$
with the quotient preuniformity.
\end{itemize}
Since $G$ is assumed to be Hausdorff and ${H}$ closed,
$\rightu G/{H}$ and $\upu G/{H}$ are in fact uniform spaces
and their topologies
coincide with the quotient topology of $G/{H}$~\cite[5.21]{Roelcke1981ust}.
However, $\leftu G/{H}$ and $\lowu G/{H}$ need not be Hausdorff;
even when they are,
their topologies may be strictly coarser than the quotient
topology~\cite[Ch.5]{Roelcke1981ust}.
But if ${H}$ is a closed neutral subgroup of $G$ then
$\leftu G/{H}$ and $\lowu G/{H}$ are Hausdorff and their
topologies coincide with the quotient topology~\cite[5.28]{Roelcke1981ust}.

By Proposition~7.7 in~\cite{Roelcke1981ust},
the preuniformity of $\leftu G/{H}$ is induced
by the set of all left invariant continuous pseudometrics on $G/{H}$.

If $\psm$ is a bounded right invariant continuous pseudometric on $G$
then $\psmtil(A,B)=\psm(A,B)$ for $A,B\iin G/{H}$,
and the restriction of the pseudometric $\psmtil$ to $G/{H}$
is continuous in the quotient topology on $G/{H}$.
By Proposition~7.6 in~\cite{Roelcke1981ust},
the uniformity of $\rightu G/{H}$ is induced by the set of all
restrictions of $\psmtil$ to $G/{H}$,
where $\psm$ runs through bounded right invariant continuous
pseudometrics on $G$.

\subsection{Uniform measures}

This subsection is a short review of definitions and results
in~\cite{Pachl2013usm}.

When $H$ is a Banach lattice with positive cone $H^+$,
for each $h\iin H$ the elements $h^+, h^- \iin H^+$ are defined by
$h^+ := h \vee 0$, $h^- := (-h) \vee 0$.
Thus $h=h^+ - h^-$.

When $X$ is a uniform space,
$\Uall(X)$ denotes the space of uniformly continuous
real-valued functions on $X$,
and $\Ub(X)$ denotes the space of bounded functions in $\Uall(X)$.
With the sup norm $\norm{\cdot}$ and the pointwise order,
$\Ub(X)$ is a Banach lattice.
A subset $\mathcal{H}$ of $\Ub(X)$ is said to be $\ueb(X)$
if the functions in $\mathcal{H}$ are uniformly equicontinuous
and uniformly bounded in the sup norm.

The Banach space dual $\Ub(X)^\ast$ of $\Ub(X)$
is also the order-bounded dual of $\Ub(X)$.
It is a Banach lattice in which
\[
\mmeas^+(f) = \sup \;\{ \mmeas(g) \mid g\iin\Ub(X)^+ \;\text{and } g \leq f\}
\quad\text{for } \mmeas\iin\Ub(X)^\ast, f \iin \Ub(X)^+ .
\]
Besides the norm topology,
two other topologies on $\Ub(X)^\ast$ are of interest:
\begin{itemize}
\item
The topology of uniform convergence on $\ueb(X)$ sets,
called the \emph{$\ueb$ topology}.
\item
The topology of simple convergence on the elements of $\Ub(X)$,
the \emph{$\Ub(X)$-weak topology}.
\end{itemize}
The \emph{$\ueb$ uniformity} on $\Ub(X)^\ast$ is the
additive uniformity of the $\ueb$ topology.
Unless stated otherwise, in this paper the space $\Ub(X)^\ast$ and its subsets
are always considered as topological and uniform spaces with their
$\ueb$ topology and $\ueb$ uniformity.

For $x\iin X$, the \emph{point mass at $x$} is the functional
$\pmass(x)\iin\Ub(X)^\ast$ defined by $\pmass(x)(f):=f(x)$ for
$f\iin\Ub(X)$.
The mapping $\pmass\colon X \to \Ub(X)^\ast$ is a uniform isomorphism
between the uniform space $X$ and its image
$\pmass(X)\subseteq\Ub(X)^\ast$ with its $\ueb$ uniformity.

We identify each $x\iin X$ with $\pmass(x)\iin\Ub(X)^\ast$,
and denote by $\compln{X}$ the $\ueb$ closure of $X$ in $\Ub(X)^\ast$.
Then $\compln{X}$ with the $\ueb$ uniformity is a complete
uniform space in which $X$ is a uniform subspace.
Thus $\compln{X}$ is a uniform completion of $X$.

The space of finite linear combinations of point masses in $\Ub(X)^\ast$
(\emph{molecular measures} on~$X$) is denoted by $\Mol(X)$.
The $\ueb$ closure of $\Mol(X)$ in $\Ub(X)^\ast$,
the space of \emph{uniform measures} on~$X$, is denoted by $\UMeas(X)$.
The space $\UMeas(X)$ is $\ueb$ complete,
and it is a band in the Banach lattice $\Ub(X)^\ast$;
in particular $\mmeas^+, \mmeas^- \iin \UMeas(X)$ whenever
$\mmeas\iin\UMeas(X)$.
The elements of $\UMeas(X)$ are characterized by a continuity property
that will be used further on:
A linear functional $\mmeas$ on $\Ub(X)$ is in $\UMeas(X)$ if and only if
$\mmeas$ is continuous on every $\ueb(X)$ subset of $\Ub(X)$
in the topology of pointwise convergence on~$X$.

From the foregoing we have
$X\subseteq\compln{X}\subseteq\UMeas(X)^+ \subseteq\UMeas(X)$.
The assignment of $\UMeas(X)$ to $X$ is functorial:
Every uniformly continuous mapping $\varphi\colon X\to Y$
uniquely extends to a continuous linear mapping
$\UMeas(\varphi)\colon \UMeas(X)\to \UMeas(Y)$,
which maps $\compln{X}$ to $\compln{Y}$
and $\UMeas(X)^+$ to $\UMeas(Y)^+$.

%%%%%%%%%%%%%%%%%%%%%%%%%%%%%%%%%%%%%%%%%%%%%%%%%%%%%%%%%%%%%%%%%%%%%%%%%%

\section{General embedding theorem}
    \label{sec:general}

This section deals with the spaces $\UMeas(X)$, $\UMeas(Y)$,
$\UMeas(X\vee Y)$ and $\UMeas(X\wedge Y)$
for two uniform spaces $X$ and $Y$ on the same set of points.
The sufficient condition in Theorem~\ref{th:embedgen} will be used
in sections~\ref{sec:embgroup} and~\ref{sec:embquot}
to derive embedding theorems for topological groups and quotient spaces.

\begin{lemma}
    \label{lem:suffcond}
Let $\pset$ be a point-separating set of pseudometrics on a set $S$.
Let $X$ be the uniform space induced by $\pset$,
and let $Y$ be another uniform space on $S$ which is coarser than $X$.
Assume that $\psmfun{z}{\psm}\iin\Uall(Y)$ for all $\psm\iin\pset$
and $z\iin S$.
Let $\mmeas\iin\UMeas(X)$, $f\iin\Ub(X)^+$ and $\varepsilon>0$.
Then there exists $g\iin\Ub(Y)^+$ such that $g\leq f$ and
$\abs{\mmeas(f)-\mmeas(g)} < \varepsilon$.
\end{lemma}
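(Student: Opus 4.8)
The plan is to build, for the prescribed $\varepsilon$, a single function $g\iin\Ub(Y)^+$ with $g\le f$ that agrees with $f$ up to a small error at finitely many sample points, and to arrange that $\mmeas$ is, for the purpose of the estimate, concentrated at those points. Since $Y$ is coarser than $X$ we have $\Ub(Y)\subseteq\Ub(X)$, so $g\iin\Ub(X)$ and $\mmeas(g)$ is defined. Because $f-g\ge 0$ and $\abs{\mmeas}\iin\UMeas(X)^+$ (the band property), the target bound follows from $\abs{\mmeas(f)-\mmeas(g)}=\abs{\mmeas(f-g)}\le\abs{\mmeas}(f-g)$, so it suffices to make $\abs{\mmeas}(f-g)$ small. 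I would first reduce the problem to this positive functional $\abs{\mmeas}$, which lets me use \emph{positive} molecular approximants and thereby control the total mass.

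Second, I would construct $g$ explicitly. Fix a target accuracy $\eta>0$ (to be chosen from $\varepsilon$ at the very end). Since $f$ is uniformly continuous on $X$ and $X$ is induced by $\pset$, there are a \emph{finite} set $F\subseteq\pset$ and $\delta>0$ with $d_F(x,x')<\delta\Rightarrow\abs{f(x)-f(x')}<\eta$, where $d_F:=\max_{\psm\iin F}\psm\iin\up(X)$. Put $\lambda:=\norm{f}/\delta$ and form the inf-convolution $f_\lambda(x):=\inf_{y\iin S}\bigl(f(y)+\lambda\,d_F(x,y)\bigr)$; a routine estimate (splitting on $d_F(x,y)<\delta$ versus $\ge\delta$) gives $0\le f_\lambda\le f$, $\norm{f-f_\lambda}\le\eta$, and $f_\lambda$ is $\lambda$-Lipschitz for $d_F$. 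For any finite set of points $x_1,\dots,x_k\iin S$ I then set
\[
 g:=\max_{1\le i\le k}\bigl(f_\lambda(x_i)-\lambda\,d_F(\cdot,x_i)\bigr)^+.
\]
The crucial use of the hypothesis is that $d_F(\cdot,x_i)=\max_{\psm\iin F}\psm(\cdot,x_i)$ is a finite maximum of the functions $\psmfun{x_i}{\psm}$, each of which lies in $\Uall(Y)$ by assumption; hence $g\iin\Ub(Y)^+$. The Lipschitz property of $f_\lambda$ gives $f_\lambda(x_i)-\lambda\,d_F(x,x_i)\le f_\lambda(x)\le f(x)$, so $g\le f$, while $g(x_i)\ge f_\lambda(x_i)\ge f(x_i)-\eta$; thus $0\le (f-g)(x_i)\le\eta$ at every sample point. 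Note that, $\eta$ having been fixed first, $F,d_F$ and $\lambda$ are fixed, so every such $g$ is $\lambda$-Lipschitz for the single pseudometric $d_F\iin\up(X)$.

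Third, I would insert the molecular approximation without circularity. With $\eta,F,d_F,\lambda$ fixed, the set $\mathcal H:=\{1,f\}\cup\{\,f-g : g\iin\Ub(Y)^+,\ g\le f,\ g\ \lambda\text{-Lipschitz for }d_F\,\}$ is uniformly bounded by $\norm{f}$ and uniformly equicontinuous on $X$, hence a $\ueb(X)$ set. Since $\abs{\mmeas}\iin\UMeas(X)^+$ lies in the $\ueb$-closure of the positive molecular measures, I can choose a \emph{positive} molecular $m_0=\sum_{i=1}^k c_i\pmass(x_i)$ ($c_i\ge0$) with $\sup_{h\iin\mathcal H}\abs{\abs{\mmeas}(h)-m_0(h)}<\eta$; evaluating at $1\iin\mathcal H$ gives $m_0(1)\le\norm{\mmeas}+\eta$. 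Only \emph{now} do I define $g$ from the points $x_1,\dots,x_k$ of $m_0$; this $g$ belongs to the defining family of $\mathcal H$, so the approximation bound applies to it. Then $\abs{\mmeas}(f-g)\le m_0(f-g)+\eta=\sum_i c_i\,(f-g)(x_i)+\eta\le\eta\,m_0(1)+\eta\le\eta(\norm{\mmeas}+\eta)+\eta$, and choosing $\eta$ small enough makes the right-hand side less than $\varepsilon$.

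The main obstacle is the mass control in the third step: a generic $\ueb$-molecular approximant of $\mmeas$ has uncontrolled total variation $\sum\abs{c_i}$, so the sample-point error $\eta\sum\abs{c_i}$ need not be small. Passing to the positive functional $\abs{\mmeas}$ and approximating it by \emph{positive} molecular measures $m_0$, for which $\sum c_i=m_0(1)$ is controlled by evaluation at the constant $1$, is exactly what rescues the estimate; this rests on the density of the positive molecular measures in $\UMeas(X)^+$ (equivalently, on the lattice-disjointness of point masses at the separated points of $X$). The remaining care is in the quantifier order---fixing $\eta$, hence $F$ and $\lambda$, \emph{before} selecting $m_0$---which keeps all candidate functions $g$ inside one $\ueb(X)$ set and makes the argument non-circular; everything else reduces to the routine Lipschitz estimates for $f_\lambda$ and the finite maximum defining $g$.
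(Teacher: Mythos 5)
Your proof is correct, but the mechanism by which you produce the finite sample set is genuinely different from the paper's. The paper works with the signed functional $\mmeas$ directly: it builds the same kind of spike functions $g_K(y)=\max_{x\in K}(f(x)-\varepsilon'-r\psm(x,y))^+$ indexed by \emph{all} finite subsets $K$ of $S$, observes that this net increases pointwise to some $f_0$ with $f-\varepsilon'\leq f_0\leq f$ inside a single $\ueb(X)$ set, and then invokes the characterization of uniform measures as functionals continuous on $\ueb(X)$ sets for the topology of pointwise convergence to pick one good $K$; the error is then bounded by $\varepsilon'\norm{\mmeas}+\varepsilon'$ with no mass control needed, because that continuity property applies to signed $\mmeas$ as is. You instead reduce to $\abs{\mmeas}\iin\UMeas(X)^+$ via the band property and the inequality $\abs{\mmeas(h)}\leq\abs{\mmeas}(h)$ for $h\geq 0$, and then use the $\ueb$-density of $\Mol(X)^+$ in $\UMeas(X)^+$ (the paper cites this as [Pachl, 6.6] in the proof of its Theorem~3.3, so it is available), taking the sample points from a positive molecular approximant and controlling its mass by evaluating at the constant $1$. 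Both routes rest on standard facts from the same source; the paper's is shorter because the pointwise-continuity characterization sidesteps both the lattice reduction and the total-variation issue, while yours makes explicit \emph{why} positivity matters (uncontrolled $\sum\abs{c_i}$ for a generic molecular approximant) and resolves it cleanly. Your auxiliary devices also differ harmlessly from the paper's: you regularize $f$ to a genuinely $d_F$-Lipschitz $f_\lambda$ by inf-convolution, where the paper instead subtracts $\varepsilon'$ inside the spike and uses the estimate $\abs{f(x)-f(y)}<\max(\varepsilon',r\psm(x,y))$ from [Pachl, 2.6]; and your insistence on fixing $\eta$, $F$, $d_F$, $\lambda$ before choosing $m_0$, so that every candidate $f-g$ lies in one predetermined $\ueb(X)$ set, is exactly the non-circularity the argument needs.
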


\begin{proof}
This is a minor variation of the proof of~\cite[6.21]{Pachl2013usm}.
Let $\varepsilon^\prime >0$ be such that
$\varepsilon = \varepsilon^\prime \norm{\mmeas} + \varepsilon^\prime$.
By~\cite[2.6]{Pachl2013usm}
there are $r>0$ and a pseudometric
$\psm=\psm_0 \vee \psm_1 \vee \dots \vee \psm_k$,
where each $\psm_j$ is in $\pset$,
such that $\abs{f(x)-f(y)} < \max(\varepsilon^\prime,r\psm(x,y))$
for all $x,y\iin S$.

For every finite set $K\subseteq S$ and $y\iin S$ let
$g_K (y) := \max_{x\in K} \; (f(x) - \varepsilon^\prime - r\psm(x,y))^+$.
By the assumption, for every $x\iin S$ the function
$\psmfun{x}{\psm} = \max_j \psmfun{x}{\psm_j}$ is in $\Uall(Y)$.
Thus $g_K\iin\Ub(Y)^+$.
For $x,y\iin S$ we have $f(x)-f(y)<\varepsilon^\prime + r\psm(x,y)$,
hence $f(x)-\varepsilon^\prime-r\psm(x,y)<f(y)$,
hence $g_K(y) \leq f(y)$.

The set $\{g_K\mid \text{finite } K \subseteq X\}$ is $\ueb(X)$ and
the net $\{g_K\}_K$ indexed by the directed set of finite subsets of $X$
converges pointwise upwards to some $f_0\iin \Ub(X)^+$.
Clearly $f - \varepsilon^\prime \leq f_0 \leq f$.
Since $\mmeas\iin\UMeas(X)$,
there is $K$ for which $\abs{\mmeas(f_0) - \mmeas(g_K)}<\varepsilon^\prime$.
It follows that
\[
\abs{\mmeas(f)-\mmeas(g_K)}
\leq \abs{\mmeas(f)-\mmeas(f_0)} + \abs{\mmeas(f_0)-\mmeas(g_K)}
< \varepsilon^\prime \norm{\mmeas} + \varepsilon^\prime
= \varepsilon.   \qedhere
\]
\end{proof}

\begin{theorem}
    \label{th:prep}
Let $\pset$ be a point-separating set of pseudometrics on a set $S$.
Let $X$ be the uniform space induced by $\pset$.
Let $Y$ be another uniform space on $S$ which is coarser than $X$,
so that the identity mapping $\iota\colon X\to Y$ is uniformly continuous.
Assume that $\psmfun{z}{\psm}\iin\Uall(Y)$ for all $\psm\iin\pset$
and $z\iin S$.
\begin{enumerate}
\item
The mapping $\UMeas(\iota)\colon \UMeas(X) \to \UMeas(Y)$
is injective.
\item
The restriction of the mapping $\UMeas(\iota)$ to the positive cone
$\UMeas(X)^+$ with its $\ueb$ topology is a topological embedding
into $\UMeas(Y)^+$ with its $\ueb$ topology.
\item
If $\UMeas(\iota)$ maps $\mmeas\iin\UMeas(X)$ to $\nmeas\iin\UMeas(Y)$ then
it maps $\mmeas^+$ to $\nmeas^+$ and $\mmeas^-$ to $\nmeas^-$.
\end{enumerate}
\end{theorem}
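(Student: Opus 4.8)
The plan is to derive all three parts from Lemma~\ref{lem:suffcond}, after one preliminary observation: since $Y$ is coarser than $X$ we have $\Ub(Y)\subseteq\Ub(X)$, and the extension $\UMeas(\iota)$ acts simply by restriction, $\UMeas(\iota)(\mmeas)=\mmeas|_{\Ub(Y)}$ for every $\mmeas\iin\UMeas(X)$ (both sides agree on point masses and depend linearly and $\ueb$-continuously on $\mmeas$). Write $\nmeas:=\UMeas(\iota)(\mmeas)$. For part (1), suppose $\nmeas=0$, i.e. $\mmeas(g)=0$ for all $g\iin\Ub(Y)$. Given $f\iin\Ub(X)^+$ and $\varepsilon>0$, Lemma~\ref{lem:suffcond} produces $g\iin\Ub(Y)^+$ with $\abs{\mmeas(f)-\mmeas(g)}<\varepsilon$; as $\mmeas(g)=0$ this gives $\abs{\mmeas(f)}<\varepsilon$, hence $\mmeas(f)=0$. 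Every element of $\Ub(X)$ is a difference of two elements of $\Ub(X)^+$, so $\mmeas=0$ and $\UMeas(\iota)$ is injective.

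For part (3) I would show $\nmeas^+=\UMeas(\iota)(\mmeas^+)$ by evaluating both positive functionals on an arbitrary $g\iin\Ub(Y)^+$. By the formula for $(\cdot)^+$ in $\Ub(Y)^\ast$, $\nmeas^+(g)=\sup\{\mmeas(h)\mid h\iin\Ub(Y)^+,\ h\leq g\}$, whereas $\UMeas(\iota)(\mmeas^+)(g)=\mmeas^+(g)=\sup\{\mmeas(h)\mid h\iin\Ub(X)^+,\ h\leq g\}$. The inequality $\leq$ is trivial, and for $\geq$ I apply Lemma~\ref{lem:suffcond} to $\mmeas$ and any competitor $h\iin\Ub(X)^+$ with $h\leq g$: it yields $h'\iin\Ub(Y)^+$ with $h'\leq h\leq g$ and $\mmeas(h')>\mmeas(h)-\varepsilon$, so the two suprema coincide. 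Thus $\nmeas^+$ and $\UMeas(\iota)(\mmeas^+)$ agree on $\Ub(Y)^+$, hence on $\Ub(Y)$, so they are equal; subtracting from $\nmeas=\UMeas(\iota)(\mmeas)$ gives $\nmeas^-=\UMeas(\iota)(\mmeas^-)$.

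Part (2) is the substantial one. The map $\UMeas(\iota)$ is $\ueb$-continuous by functoriality and maps $\UMeas(X)^+$ into $\UMeas(Y)^+$; it is injective by (1); so the task is to show its inverse on the cone is continuous, i.e. that the intrinsic $\ueb$ topology of $\UMeas(X)^+$ is no finer than the topology pulled back from $\UMeas(Y)^+$. Here I would reduce to the weak topology: on a positive cone the $\ueb$ topology coincides with the topology of pointwise convergence on $\Ub(\cdot)$ (a Dini-type property of $\UMeas(X)^+$, to be invoked from~\cite{Pachl2013usm}). Granting this, it suffices to prove that for a net in $\UMeas(X)^+$ pointwise convergence on $\Ub(Y)$ forces pointwise convergence on $\Ub(X)$. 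So let $\mmeas_\alpha,\mmeas\iin\UMeas(X)^+$ with $\mmeas_\alpha(g)\to\mmeas(g)$ for all $g\iin\Ub(Y)$, and fix $f\iin\Ub(X)^+$ with $f\leq M$. For each $g\iin\Ub(Y)^+$ with $g\leq f$, positivity gives $\mmeas_\alpha(f)\geq\mmeas_\alpha(g)\to\mmeas(g)$, and taking the supremum over such $g$ (which equals $\mmeas(f)$ by Lemma~\ref{lem:suffcond} and positivity) yields $\liminf_\alpha\mmeas_\alpha(f)\geq\mmeas(f)$. Applying the same bound to $M-f\iin\Ub(X)^+$ and using that $1\iin\Ub(Y)$, so $\mmeas_\alpha(1)\to\mmeas(1)$, gives $\limsup_\alpha\mmeas_\alpha(f)\leq\mmeas(f)$; hence $\mmeas_\alpha(f)\to\mmeas(f)$.

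The main obstacle is precisely the inverse continuity in (2). The naive route—controlling $\sup_{f}\abs{\mmeas_\alpha(f)-\mmeas(f)}$ over an $\ueb(X)$ set $\pset$ directly—breaks down because the approximants $g$ furnished by Lemma~\ref{lem:suffcond} lie in $\Ub(Y)$ individually but, as $f$ and the supporting finite sets vary, need not form an $\ueb(Y)$ family (the functions $\psmfun{z}{\psm}$ are in $\Uall(Y)$ for each fixed $z$, but not uniformly equicontinuous over $z$). Passing through the weak topology on the cone circumvents this, since it reduces the problem to pointwise statements where Lemma~\ref{lem:suffcond} applies one function at a time and positivity supplies the matching upper bound via the complement $M-f$.
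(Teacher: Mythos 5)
Your proposal is correct and follows essentially the same route as the paper: all three parts are reduced to Lemma~\ref{lem:suffcond}, part (2) by passing to the $\Ub(\cdot)$-weak topology and using its coincidence with the $\ueb$ topology on positive cones (the paper cites \cite[6.13]{Pachl2013usm} for this), with your two-sided estimate via minorants and the complement $M-f$ matching the paper's choice of $g_0\leq f\leq g_1$ with $\mmeas(g_1-g_0)<\varepsilon$. Part (3) via comparing the two suprema on $\Ub(Y)^+$ is exactly the paper's argument.
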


\begin{proof}
The image of $\mmeas$ under $\UMeas(\iota)$
is the restriction of $\mmeas$ to the space $\Ub(Y)\subseteq\Ub(X)$.

1. If $\mmeas\iin\UMeas(X)$ is such that $\mmeas(g)=0$
for every $g\iin\Ub(Y)$
then $\mmeas(f)=0$ for every $f\iin\Ub(X)$
by Lemma~\ref{lem:suffcond}.
That means that the kernel of the linear mapping $\UMeas(\iota)$ is $\{0\}$.

2. Take any $\mmeas\iin\UMeas(X)^+$
and a net $\{\mmeas_\gamma \}_\gamma$ in $\UMeas(X)^+$
such that $\lim_\gamma \mmeas_\gamma(g) = \mmeas(g)$
for every $g\iin\Ub(Y)$.
Take any $f\iin\Ub(X)$ and $\varepsilon>0$.

By Lemma~\ref{lem:suffcond} there are $g_0,g_1\iin\Ub(Y)$
such that $g_0 \leq f \leq g_1$ and $\mmeas(g_1 - g_0)<\varepsilon$.
For almost all $\gamma$ we have
$\mmeas (g_0) - \mmeas_\gamma(g_0) < \varepsilon$
and
$\mmeas_\gamma (g_1) - \mmeas(g_1) < \varepsilon$,
therefore
\begin{align*}
\mmeas(f) - \mmeas_\gamma(f)
& \leq \mmeas(f-g_0) + \mmeas(g_0) - \mmeas_\gamma(g_0) < 2\varepsilon \\
\mmeas_\gamma(f) - \mmeas(f)
& \leq \mmeas_\gamma(g_1) - \mmeas(g_1) + \mmeas(g_1 - f)  < 2\varepsilon
\end{align*}
Hence $\lim_\gamma \mmeas_\gamma(f) = \mmeas(f)$.

This proves that the mapping $\UMeas(\iota)$
is a topological embedding of $\UMeas(X)^+$ with the $\Ub(X)$-weak
topology into $\UMeas(Y)^+$ with the $\Ub(Y)$-weak topology.
By~\cite[6.13]{Pachl2013usm}
the $\Ub(\cdot)$-weak topology and the $\ueb$ topology coincide on
the positive cones $\UMeas(X)^+$ and $\UMeas(Y)^+$,
and therefore $\UMeas(\iota)$ is also a topological embedding when
$\UMeas(X)^+$ and $\UMeas(Y)^+$ have their $\ueb$ topologies.

3. Let $\nmeas=\UMeas(\iota)(\mmeas)$,
so that $\nmeas(h)=\mmeas(h)$ for all $h\iin\Ub(Y)$.
Let $\nmeas^\prime=\UMeas(\iota)(\mmeas^+)$,
and take any $h\iin\Ub(Y)^+$ and $\varepsilon>0$.
By definition,
\begin{align*}
\nmeas^\prime(h) & = \sup \{ \mmeas(f) \mid f\iin\Ub(X)^+, f \leq h \} \\
\nmeas^+(h) & = \sup \{ \mmeas(g) \mid g\iin\Ub(Y)^+, g \leq h \}
\end{align*}
By Lemma~\ref{lem:suffcond},
for every $f\iin\Ub(X)^+$ there is $g\iin\Ub(Y)^+$ such that $g \leq f$
and $\abs{\mmeas(f)-\mmeas(g)}<\varepsilon$.
Hence $\abs{\nmeas^\prime(h)-\nmeas^+(h)}\leq\varepsilon$.

It follows that $\nmeas^+=\UMeas(\iota)(\mmeas^+)$ and
$\nmeas^- = \nmeas^+ - \nmeas = \UMeas(\iota)(\mmeas^+ - \mmeas)
= \UMeas(\iota)(\mmeas^-)$.
\end{proof}

The forthcoming general embedding theorem
deals with four uniform spaces on the same set $S$:
$X_0$, $X_1$, $X_\vee :=X_0 \vee X_1$ and $X_\wedge := X_0 \wedge X_1$.
We consider the identity mapping $\iota\colon S\to S$
in five different roles as a uniformly continuous mapping,
along with its unique extensions to continuous linear mappings
between the spaces $\UMeas(\cdot)$.
In the diagram and in what follows
I write $\extm{\iota_{\alpha\beta}}$ instead of $\UMeas(\iota_{\alpha\beta})$,
in the interest of readability.

\begin{picture}(200,120)(-80,20)
\thicklines

%left part
\put(-10,110){\makebox(30,20){${X_\vee}$}}
\put(-80,70){\makebox(30,20){${X_0}$}}
\put(60,70){\makebox(30,20){${X_1}$}}
\put(-10,30){\makebox(30,20){${X_\wedge}$}}

\put(-10,115){\vector(-2,-1){50}}
\put(20,115){\vector(2,-1){50}}
\put(70,70){\vector(-2,-1){50}}
\put(-60,70){\vector(2,-1){50}}
\put(3,110){\vector(0,-1){60}}

\put(-55,100){\makebox(30,20){$\iota_{\vee 0}$}}
\put(35,100){\makebox(30,20){$\iota_{\vee 1}$}}
\put(-55,43){\makebox(30,20){$\iota_{0\wedge}$}}
\put(35,40){\makebox(30,20){$\iota_{1\wedge}$}}
\put(0,70){\makebox(30,20){$\initfin$}}

%right part
\put(190,112){\makebox(30,20){$\UMeas(X_\vee)$}}
\put(120,70){\makebox(30,20){$\UMeas(X_0)$}}
\put(270,70){\makebox(30,20){$\UMeas(X_1)$}}
\put(190,26){\makebox(30,20){$\UMeas(X_\wedge)$}}

\put(190,115){\vector(-2,-1){50}}
\put(220,115){\vector(2,-1){50}}
\put(270,70){\vector(-2,-1){50}}
\put(140,70){\vector(2,-1){50}}
\put(203,110){\vector(0,-1){60}}

\put(140,98){\makebox(30,20){$\extm{\iota_{\vee 0}}$}}
\put(240,98){\makebox(30,20){$\extm{\iota_{\vee 1}}$}}
\put(140,43){\makebox(30,20){$\extm{\iota_{0\wedge}}$}}
\put(240,43){\makebox(30,20){$\extm{\iota_{1\wedge}}$}}
\put(200,70){\makebox(30,20){$\extm{\initfin}$}}
\end{picture}

\noindent
To simplify the notation,
I let $\alpha\beta$ stand for any of
$\vee 0$, $\vee 1$, $0\wedge$, $1\wedge$, $\vee\wedge$,
and treat it as an ordered pair of indices;
for example, if $\alpha\beta$ is $\vee 0$ then $\alpha$ is $\vee$,
$\beta$ is $0$, $X_{\alpha}$ is $X_\vee$
and $\iota_{\alpha\beta}$ is $\iota_{\vee 0}$.

\begin{theorem}
    \label{th:embedgen}
Let $\pset_0$ and $\pset_1$ be two point-separating sets of
pseudometrics on a set $S$.
For $j=0,1$ denote by $X_j$ the uniform space induced by $\pset_j$,
and write $X_\vee :=X_0 \vee X_1$, $X_\wedge := X_0 \wedge X_1$.
For $\alpha\beta \iin \{\vee 0, \vee 1, 0\wedge, 1\wedge, \vee\wedge\}$,
let $\iota_{\alpha\beta}\colon X_\alpha \to X_\beta$
be the uniformly continuous identity mapping.
Assume that
\begin{itemize}
\item
$\psmfun{z}{\psm}\iin\Uall(X_1)$ for all $\psm\iin\pset_0$ and $z\iin S$;
\item
$\psmfun{z}{\psm}\iin\Uall(X_0)$ for all $\psm\iin\pset_1$ and $z\iin S$.
\end{itemize}

Then for every
$\alpha\beta \iin \{\vee 0, \vee 1, 0\wedge, 1\wedge, \vee\wedge\}$
the mapping
$\extm{\iota_{\alpha\beta}}=\UMeas(\iota_{\alpha\beta})$ from
$\UMeas(X_\alpha)$ to $\UMeas(X_\beta)$ is injective
and its restriction to the positive cone $\UMeas(X_\alpha)^+$
with its $\ueb$ topology
is a topological embedding into $\UMeas(X_\beta)^+$
with its $\ueb$ topology.
Moreover,
\begin{equation}
    \label{eq:coinspec}
\extm{\initfin}(\UMeas(X_\vee))
=\extm{\iota_{0\wedge}}(\UMeas(X_0))
\,\cap\, \extm{\iota_{1\wedge}}(\UMeas(X_1)).
\tag{$\bumpeq$}
\end{equation}
\end{theorem}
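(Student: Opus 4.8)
The plan is to reduce the injectivity and positive-cone embedding claims to Theorem~\ref{th:prep}, and then to treat the coincidence~\eqref{eq:coinspec} separately by constructing a common lift in the positive cones. First I would verify the hypotheses of Theorem~\ref{th:prep} for each of the five mappings. The spaces $X_\vee$, $X_0$ and $X_1$ are induced by the point-separating families $\pset_0\cup\pset_1$, $\pset_0$ and $\pset_1$, and these are exactly the domains that occur among the five $\iota_{\alpha\beta}$ (the space $X_\wedge$ appears only as a codomain). For a pseudometric $\psm$ in the family inducing the domain $X_\alpha$, the function $\psmfun{z}{\psm}$ is $\psm$-Lipschitz and hence lies in $\Uall(X_\alpha)$; the two standing hypotheses supply the remaining memberships, so that in every case $\psmfun{z}{\psm}\iin\Uall(X_\beta)$ for all $\psm$ in the family inducing $X_\alpha$ and all $z\iin S$. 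I would also record that $\Uall(X_\wedge)=\Uall(X_0)\cap\Uall(X_1)$ already contains every such $\psmfun{z}{\psm}$, and that these functions separate points of $S$; hence $X_\wedge$ is Hausdorff and is a legitimate codomain. Theorem~\ref{th:prep} then gives injectivity of each $\extm{\iota_{\alpha\beta}}$ and the fact that its restriction to $\UMeas(X_\alpha)^+$ is a topological embedding.

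For~\eqref{eq:coinspec} the inclusion $\subseteq$ is immediate from commutativity, $\extm{\initfin}=\extm{\iota_{0\wedge}}\circ\extm{\iota_{\vee 0}}=\extm{\iota_{1\wedge}}\circ\extm{\iota_{\vee 1}}$, so every $\extm{\initfin}(\mmeas)$ lies in both images on the right. For the reverse inclusion, suppose $\nmeas\iin\UMeas(X_\wedge)$ equals $\extm{\iota_{0\wedge}}(\mmeas_0)=\extm{\iota_{1\wedge}}(\mmeas_1)$ with $\mmeas_j\iin\UMeas(X_j)$. By part~3 of Theorem~\ref{th:prep} these maps carry positive and negative parts to positive and negative parts, so $\nmeas^+$ and $\nmeas^-$ each lie in $\extm{\iota_{0\wedge}}(\UMeas(X_0)^+)\cap\extm{\iota_{1\wedge}}(\UMeas(X_1)^+)$. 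Since the image of $\extm{\initfin}$ is a linear subspace, it suffices to prove that a \emph{positive} $\nmeas$ lying in both positive-cone images is in $\extm{\initfin}(\UMeas(X_\vee)^+)$. Thus the problem reduces to producing a common positive lift.

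So let $\nmeas\iin\UMeas(X_\wedge)^+$ with $\nmeas=\extm{\iota_{0\wedge}}(\mmeas_0)=\extm{\iota_{1\wedge}}(\mmeas_1)$ and $\mmeas_j\iin\UMeas(X_j)^+$. Recall that $\extm{\iota_{0\wedge}}(\mmeas_0)$ is just the restriction of $\mmeas_0$ to $\Ub(X_\wedge)\subseteq\Ub(X_0)$, so $\mmeas_0$ and $\mmeas_1$ both agree with $\nmeas$ on $\Ub(X_\wedge)=\Ub(X_0)\cap\Ub(X_1)$. I would define the candidate lift on $\Ub(X_\vee)^+$ by the inner formula $\mmeas(f):=\sup\{\nmeas(h)\mid h\iin\Ub(X_\wedge)^+,\ h\leq f\}$. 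Using Lemma~\ref{lem:suffcond} for $\iota_{0\wedge}$ and $\iota_{1\wedge}$ one checks that this supremum is unchanged if $h$ is allowed to range over $\Ub(X_0)^+$ or over $\Ub(X_1)^+$; in particular $\mmeas$ agrees with $\mmeas_0$ on $\Ub(X_0)^+$ and with $\mmeas_1$ on $\Ub(X_1)^+$, and with $\nmeas$ on $\Ub(X_\wedge)^+$. The functional $\mmeas$ is plainly positive, monotone, positively homogeneous, superadditive, and bounded by $\norm{\nmeas}\,\norm{f}$.

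The hard part is to promote $\mmeas$ to a genuine element of $\UMeas(X_\vee)^+$: one must prove that $\mmeas$ is \emph{additive} on $\Ub(X_\vee)^+$, so that it extends to a positive linear functional on $\Ub(X_\vee)$, and that it satisfies the continuity characterization of uniform measures, namely continuity on every $\ueb(X_\vee)$ set in the topology of pointwise convergence on $S$. The obstruction to additivity is concrete. Given $h\iin\Ub(X_\wedge)^+$ with $h\leq f_1+f_2$ and $f_1,f_2\iin\Ub(X_\vee)^+$, one wants to split $h$ into $\Ub(X_\wedge)^+$ pieces dominated by $f_1$ and by $f_2$; but the natural lattice split $\min(h,f_1)$, $(h-f_1)^+$ lands in $\Ub(X_\vee)$ rather than $\Ub(X_\wedge)$. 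This is exactly the point at which the standing hypotheses $\psmfun{z}{\psm}\iin\Uall(X_{1-j})$ are essential: they allow pseudometric approximations valid in one structure to be tracked in the other, providing the insertion of an $X_\wedge$-uniformly continuous function between the required pointwise bounds, and hence the approximate splitting. This step is the measure-theoretic analogue of the classical fact that a filter Cauchy in both $X_0$ and $X_1$ is Cauchy in $X_\vee$, the role of the hypotheses here mirroring that of neutrality of the subgroup in the Roelcke--Dierolf argument. I expect this splitting to be the main obstacle. Once additivity and the $\ueb$-continuity property are in place, $\mmeas\iin\UMeas(X_\vee)^+$ and $\extm{\initfin}(\mmeas)=\mmeas|_{\Ub(X_\wedge)}=\nmeas$, which finishes the reverse inclusion and hence~\eqref{eq:coinspec}.
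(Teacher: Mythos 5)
Your reduction of the first part to Theorem~\ref{th:prep} is exactly right, as is the reduction of~\eqref{eq:coinspec} to the case $\nmeas\geq 0$ via part~3 of Theorem~\ref{th:prep} and the trivial inclusion $\subseteq$. The gap is in the one step you yourself flag as ``the main obstacle'': you define the candidate lift $\mmeas(f):=\sup\{\nmeas(h)\mid h\iin\Ub(X_\wedge)^+,\ h\leq f\}$ but never prove that it is additive on $\Ub(X_\vee)^+$, nor that it satisfies the pointwise-continuity characterization of membership in $\UMeas(X_\vee)$. The route you sketch for additivity does not obviously close: to split $h\iin\Ub(X_\wedge)^+$ with $h\leq f_1+f_2$, you would need to insert $X_\wedge$-uniformly continuous minorants under $\min(h,f_1)$ and $(h-f_1)^+$ with controlled loss, but the only tool available for such insertions, Lemma~\ref{lem:suffcond}, controls the loss by a uniform measure on the \emph{finer} space (there $X=X_\vee$, $Y=X_\wedge$) --- precisely the object you are trying to construct. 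So the argument as proposed is circular at its crucial point, and the $\ueb$-continuity of the would-be lift is not addressed at all.

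The paper avoids this construction entirely. For positive $\mmeas$ it uses that $\Mol(X_\wedge)^+$ is dense in $\UMeas(X_\wedge)^+$ and that $\Mol(X_\wedge)^+=\extm{\initfin}(\Mol(X_\vee)^+)$, picks a net $\{\mmeas_\gamma\}_\gamma$ in $\Mol(X_\vee)^+$ with $\extm{\initfin}(\mmeas_\gamma)\to\mmeas$, uses the already-proved embeddings to see that the images of the net in $\UMeas(X_0)^+$ and $\UMeas(X_1)^+$ converge to $\mmeas_0$ and $\mmeas_1$, and then extracts a $\Ub(X_\vee)$-weak cluster point $\nmeas$ of the net in the $\Ub(X_\vee)$-weakly compact positive ball of $\Ub(X_\vee)^\ast$. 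The external input that replaces your additivity argument is~\cite[6.25]{Pachl2013usm}: a positive functional on $\Ub(X_\vee)$ whose images in $\Ub(X_0)^\ast$ and $\Ub(X_1)^\ast$ are uniform measures is itself a uniform measure on $X_\vee=X_0\vee X_1$. If you want to salvage your construction, you would essentially have to reprove that result; as written, your proposal is incomplete at exactly the step that carries the weight of the theorem.
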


\begin{proof}
To prove the first statement for $\extm{\iota_{\vee 0}}$,
apply Theorem~\ref{th:prep} with $X=X_\vee$, $Y=X_0$
and $\pset = \pset_0 \cup \pset_1$.
Take any $z\iin S$.
If $\psm\iin\pset_0$ then $\psmfun{z}{\psm}\iin\Uall(X_0)$
by the definition of $X_0$.
If $\psm\iin\pset_1$ then $\psmfun{z}{\psm}\iin\Uall(X_0)$ by the assumption.
Hence $\extm{\iota_{\vee 0}}$ is injective and its restriction to
$\UMeas(X_\vee)^+$ is a topological embedding.
The same holds for $\extm{\iota_{\vee 1}}$ by
Theorem~\ref{th:prep} with $X=X_\vee$, $Y=X_1$.

To prove the first statement for $\extm{\iota_{0\wedge}}$,
apply Theorem~\ref{th:prep} with $X=X_0$, $Y=X_\wedge$ and $\pset=\pset_0$.
For $z\iin S$ and $\psm\iin\pset$ we have
$\psmfun{z}{\psm}\iin\Uall(X_0)$ by the definition of $X_0$,
and $\psmfun{z}{\psm}\iin\Uall(X_1)$ by the assumption,
hence $\psmfun{z}{\psm}\iin\Uall(X_0)\cap\Uall(X_1)=\Uall(X_\wedge)$.
Thus $\extm{\iota_{0\wedge}}$ is injective and its restriction to
$\UMeas(X_0)^+$ is a topological embedding.
The same holds for $\extm{\iota_{1\wedge}}$
by Theorem~\ref{th:prep} with $X=X_1$, $Y=X_\wedge$ and $\pset=\pset_1$.

The mapping $\extm{\initfin}$ is injective and its restriction
to $\UMeas(X_\vee)^+$ is a topological embedding
because
$\extm{\initfin}= \extm{\iota_{0\wedge}} \circ \extm{\iota_{\vee 0}}$.

To prove the equality \thetag{\ref{eq:coinspec}},
take any
$\mmeas\iin\extm{\iota_{0\wedge}}(\UMeas(X_0))
\cap\extm{\iota_{1\wedge}}(\UMeas(X_1))$.
There are $\mmeas_0 \iin\UMeas(X_0)$
and $\mmeas_1 \iin\UMeas(X_1)$
for which
$\mmeas=\extm{\iota_{0\wedge}}(\mmeas_0)=\extm{\iota_{1\wedge}}(\mmeas_1)$.
First I prove that
$\mmeas\iin\extm{\initfin}(\UMeas(X_\vee))$ under the additional
assumption that $\mmeas\geq 0$.
By~\cite[6.6]{Pachl2013usm} the set
$\Mol(X_\wedge)^+$ is dense in $\UMeas(X_\wedge)^+$.
Since obviously $\Mol(X_\wedge)^+ = \extm{\initfin} (\Mol(X_\vee)^+)$,
there is a net $\{\mmeas_\gamma\}_\gamma$ in $\Mol(X_\vee)^+$ such that
\[
\lim_\gamma \extm{\initfin}(\mmeas_\gamma)=\mmeas
\quad\text{in } \;\UMeas(X_\wedge)^+.
\]
By the already proved statements about topological embeddings,
\begin{align*}
\lim_\gamma \extm{\iota_{\vee 0}}(\mmeas_\gamma)=\mmeas_0
\quad\text{in } \;\UMeas(X_0)^+   \\
\lim_\gamma \extm{\iota_{\vee 1}}(\mmeas_\gamma)=\mmeas_1
\quad\text{in } \;\UMeas(X_1)^+
\end{align*}
Assume, without loss of generality, that there is $r>0$ such that
$\norm{\mmeas_\gamma}\leq r$ for all $\gamma$.
The set
\[
B := \{ \nmeas \iin \Ub(X_\vee)^\ast
\mid \nmeas \geq 0 \text{ and } \norm{\nmeas} \leq r \}
\]
is $\Ub(X_\vee)$-weakly compact and $\mmeas_\gamma \iin B$ for all $\gamma$.
Hence the net $\{\mmeas_\gamma\}_\gamma$ has a cluster point $\nmeas\iin B$
in the $\Ub(X_\vee)$-weak topology.

Now $\nmeas$ is mapped to $\mmeas_0$ by the natural extension
of $\extm{\iota_{\vee 0}}$ to a $\Ub$-weakly continuous linear mapping
$\Ub(X_\vee)^\ast \to \Ub(X_0)^\ast$,
and it is mapped to $\mmeas_1$ by the extension
of $\extm{\iota_{\vee 1}}$ to a $\Ub$-weakly continuous linear mapping
$\Ub(X_\vee)^\ast \to \Ub(X_1)^\ast$.
By~\cite[6.25]{Pachl2013usm} we get $\nmeas\iin\UMeas(X_\vee)$.
Since $\mmeas_0 = \extm{\iota_{\vee 0}} (\nmeas)$ and
$\mmeas_1 = \extm{\iota_{\vee 1}} (\nmeas)$,
it follows that $\mmeas =\extm{\initfin}(\nmeas)$.

Finally I prove $\mmeas\iin\extm{\initfin}(\UMeas(X_\vee))$
without the assumption $\mmeas\geq 0$.
By part~3 of Theorem~\ref{th:prep} we have
$\mmeas^+=\extm{\iota_{0\wedge}}(\mmeas_0^+)
=\extm{\iota_{1\wedge}}(\mmeas_1^+)$
and
$\mmeas^-=\extm{\iota_{0\wedge}}(\mmeas_0^-)
=\extm{\iota_{1\wedge}}(\mmeas_1^-)$.
Hence there are $\nmeas_0,\nmeas_1\iin\UMeas(X_\vee)$
for which $\extm{\initfin}(\nmeas_0)=\mmeas^+$ and
$\extm{\initfin}(\nmeas_1)=\mmeas^-$.
Then $\extm{\initfin}(\nmeas_0-\nmeas_1)=\mmeas$.
\end{proof}

%%%%%%%%%%%%%%%%%%%%%%%%%%%%%%%%%%%%%%%%%%%%%%%%%%%%%%%%%%%%%%%%%%%%%%%%%%

\section{Embedding theorem for topological groups}
    \label{sec:embgroup}

In this section I apply Theorem~\ref{th:embedgen} to the four
uniform structures on a topological group.
Although Theorem~\ref{th:embedgroup} is a special case of
Theorem~\ref{th:embedquot} in the next section,
I present the former first because it captures the essence
of the key result without additional complications.

When $\psm$ is a left invariant continuous pseudometric
on a topological group $G$ and $z\iin G$,
the function $\psmfun{z}{\psm}$ is \emph{left} uniformly continuous.
Less obviously, $\psmfun{z}{\psm}$ is also \emph{right} uniformly continuous:

\begin{lemma}
    \label{lem:spikef}
Let $G$ be a topological group,
$\psm$ a left invariant continuous pseudometric on $G$,
and $z\iin G$.
Then  $\psmfun{z}{\psm}\iin\Uall(\rightu G)$.
\end{lemma}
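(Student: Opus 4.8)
The plan is to verify directly that $\psmfun{z}{\psm}$ is uniformly continuous for the right uniformity, by exhibiting for each $\varepsilon>0$ a neighbourhood $V$ of $e_G$ such that $xy^{-1}\iin V$ forces $\abs{\psmfun{z}{\psm}(x)-\psmfun{z}{\psm}(y)}<\varepsilon$; since the sets $\{(x,y):xy^{-1}\iin V\}$, with $V$ ranging over neighbourhoods of $e_G$, form a base for the right uniformity, this will suffice. Note first that the companion assertion for the left uniformity is immediate: $\psm$ is left invariant and continuous, hence $\psm\iin\up(\leftu G)$, and $\psmfun{z}{\psm}$ is $\psm$-Lipschitz, so $\psmfun{z}{\psm}\iin\Uall(\leftu G)$. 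The whole content of the lemma is therefore the \emph{right} uniform continuity.

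The obstacle is exactly the mismatch between the two sides. The naive estimate $\abs{\psm(x,z)-\psm(y,z)}\leq\psm(x,y)$ is of no use here, because left invariance gives $\psm(x,y)=\psm(e_G,x^{-1}y)$, so this quantity is controlled by $x^{-1}y$, i.e.\ by the \emph{left} uniformity, not by $xy^{-1}$. The key idea that resolves this is to use left invariance to transfer the perturbation from the variable pair $(x,y)$ onto the fixed point $z$. Concretely, writing $g:=xy^{-1}$ and left-multiplying both arguments of $\psm(x,z)$ by $g^{-1}=yx^{-1}$ gives $\psm(x,z)=\psm(y,g^{-1}z)$. Hence
\[
\abs{\psmfun{z}{\psm}(x)-\psmfun{z}{\psm}(y)}
=\abs{\psm(y,g^{-1}z)-\psm(y,z)}
\leq\psm(g^{-1}z,z),
\]
the last step being the triangle inequality. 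The decisive point is that the resulting bound $\psm(g^{-1}z,z)$ no longer involves $x$ and $y$ separately: it depends only on $g=xy^{-1}$ and on the fixed point $z$.

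It then remains to choose $V$. Because inversion and multiplication are continuous and $\psm$ is a continuous pseudometric on $G$, the function $g\mapsto\psm(g^{-1}z,z)$ is continuous, and it vanishes at $g=e_G$ since $\psm(z,z)=0$. So for each $\varepsilon>0$ there is a neighbourhood $V$ of $e_G$ with $\psm(g^{-1}z,z)<\varepsilon$ for all $g\iin V$; combined with the displayed estimate this gives the required control whenever $xy^{-1}\iin V$, and hence $\psmfun{z}{\psm}\iin\Uall(\rightu G)$. The one background fact I rely on is that the right uniformity of $G$ --- defined in the paper through right invariant continuous pseudometrics --- has the sets $\{(x,y):xy^{-1}\iin V\}$ as a base of entourages; equivalently, its neighbourhood filter at $e_G$ coincides with the topological one. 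I expect the substitution step, rather than this routine verification, to be the crux: recognizing that the variation must be shifted onto the fixed argument $z$ so that the estimate becomes governed by $xy^{-1}$.
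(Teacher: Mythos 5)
Your proof is correct and rests on the same key idea as the paper's: using left invariance to shift the variation onto the fixed argument $z$, your bound $\psm(g^{-1}z,z)$ with $g=xy^{-1}$ equals the paper's bound $\psm(x^{-1}z,y^{-1}z)$ after left-multiplying by $y^{-1}$. The only difference is the final packaging: the paper exhibits the bound as a right invariant continuous pseudometric $\psm^\prime(x,y):=\psm(x^{-1}z,y^{-1}z)$, so that $\psmfun{z}{\psm}$ is $\psm^\prime$-Lipschitz and uniform continuity follows directly from the paper's pseudometric-based definition of $\rightu G$, whereas you conclude via the entourage base $\{(x,y)\mid xy^{-1}\iin V\}$, which additionally invokes the standard (but not entirely trivial) equivalence of that entourage description with the pseudometric description of the right uniformity.
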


\begin{proof}
Define the pseudometric $\psm^\prime$ by
$\psm^\prime (x,y) := \psm(x^{-1} z , y^{-1} z)$, $x,y\iin G$.
Then $\psm^\prime$ is continuous and right invariant,
and for $x,y\iin G$ we have
\[
\abs{ \psmfun{z}{\psm}(x) -  \psmfun{z}{\psm}(y)}
= \abs{ \psm( e_G, x^{-1} z) - \psm( e_G, y^{-1} z) }
\leq \psm(x^{-1} z, y^{-1} z)
= \psm^\prime (x,y) .   \qedhere
\]
\end{proof}

The following embedding theorem
deals with natural mappings between the spaces $\UMeas(\cdot)$
for the four uniformities on a topological group.

\begin{picture}(200,120)(-80,20)
\thicklines

%left part
\put(-10,110){\makebox(30,20){${\upu G}$}}
\put(-80,70){\makebox(30,20){${\leftu G}$}}
\put(60,70){\makebox(30,20){${\rightu G}$}}
\put(-10,30){\makebox(30,20){${\lowu G}$}}

\put(-10,115){\vector(-2,-1){50}}
\put(20,115){\vector(2,-1){50}}
\put(70,70){\vector(-2,-1){50}}
\put(-60,70){\vector(2,-1){50}}
\put(3,110){\vector(0,-1){60}}

\put(-55,100){\makebox(30,20){$\initL$}}
\put(35,100){\makebox(30,20){$\initR$}}
\put(-55,43){\makebox(30,20){$\finL$}}
\put(35,40){\makebox(30,20){$\finR$}}
\put(0,70){\makebox(30,20){$\initfin$}}

%right part
\put(190,112){\makebox(30,20){$\UMeas(\upu G)$}}
\put(120,70){\makebox(30,20){$\UMeas(\leftu G)$}}
\put(270,70){\makebox(30,20){$\UMeas(\rightu G)$}}
\put(190,26){\makebox(30,20){$\UMeas(\lowu G)$}}

\put(190,115){\vector(-2,-1){50}}
\put(220,115){\vector(2,-1){50}}
\put(270,70){\vector(-2,-1){50}}
\put(140,70){\vector(2,-1){50}}
\put(203,110){\vector(0,-1){60}}

\put(140,98){\makebox(30,20){$\extm{\initL}$}}
\put(240,98){\makebox(30,20){$\extm{\initR}$}}
\put(140,43){\makebox(30,20){$\extm{\finL}$}}
\put(240,43){\makebox(30,20){$\extm{\finR}$}}
\put(200,70){\makebox(30,20){$\extm{\initfin}$}}
\end{picture}

\begin{theorem}
    \label{th:embedgroup}
Let $G$ be a topological group.
For $\alpha\beta \iin \{\vee L, \vee R, L\wedge, R\wedge, \vee\wedge\}$
let $\iota_{\alpha\beta}\colon\genu{\alpha} G \to \genu{\beta} G$
be the uniformly continuous identity mapping.

Then for every
$\alpha\beta \iin \{\vee L, \vee R, L\wedge, R\wedge, \vee\wedge\}$
the mapping
$\extm{\iota_{\alpha\beta}}=\UMeas(\iota_{\alpha\beta})$ from
$\UMeas(\genu{\alpha} G)$ to $\UMeas(\genu{\beta} G)$ is injective
and its restriction to the positive cone $\UMeas(\genu{\alpha} G)^+$
with its $\ueb$ topology
is a topological embedding into $\UMeas(\genu{\beta} G)^+$
with its $\ueb$ topology.
Moreover,
\begin{equation*}
    \label{eq:coing}
\extm{\initfin}(\UMeas(\upu G))
= \extm{\finL}(\UMeas(\leftu G)) \,\cap\, \extm{\finR}(\UMeas(\rightu G)) .
\end{equation*}
\end{theorem}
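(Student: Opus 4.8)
The plan is to obtain Theorem~\ref{th:embedgroup} as an immediate application of the general Theorem~\ref{th:embedgen}, taking $S=G$, $X_0=\leftu G$ and $X_1=\rightu G$. By definition the right and left uniformities are induced by the sets $\pset_1$ and $\pset_0$ of all (equivalently, all bounded) right invariant, respectively left invariant, continuous pseudometrics on $G$. Since $G$ is Hausdorff, $\leftu G$ and $\rightu G$ are genuine uniform spaces, so $\pset_0$ and $\pset_1$ separate the points of $G$, as Theorem~\ref{th:embedgen} requires. With these choices $X_\vee = \leftu G \vee \rightu G = \upu G$ and $X_\wedge = \leftu G \wedge \rightu G = \lowu G$ by the definitions of the upper and lower uniformities, and the five arrows $\iota_{\alpha\beta}$ of Theorem~\ref{th:embedgen} coincide with the five identity mappings of the present theorem under the index correspondence $0\leftrightarrow L$, $1\leftrightarrow R$. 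Consequently, once the two hypotheses of Theorem~\ref{th:embedgen} are verified, its entire conclusion --- injectivity, the positive-cone embedding for all five arrows, and the intersection identity for $\extm{\initfin}(\UMeas(\upu G))$ --- transfers verbatim.

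The only real task is therefore to check the two spike-function hypotheses. The first, that $\psmfun{z}{\psm}\iin\Uall(\rightu G)$ for every left invariant continuous pseudometric $\psm\iin\pset_0$ and every $z\iin G$, is exactly Lemma~\ref{lem:spikef}. The second is its mirror image: for every right invariant continuous pseudometric $\psm\iin\pset_1$ and $z\iin G$ I must show $\psmfun{z}{\psm}\iin\Uall(\leftu G)$. I would prove this by the symmetric computation. Using right invariance to rewrite $\psm(x,z)=\psm(e_G, z x^{-1})$, I define the auxiliary pseudometric $\psm^\prime(x,y):=\psm(z x^{-1}, z y^{-1})$. Then $\psm^\prime$ is continuous and, crucially, left invariant: replacing $x,y$ by $gx,gy$ introduces a common right factor $g^{-1}$ that right invariance of $\psm$ absorbs. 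The triangle inequality then gives $\abs{\psmfun{z}{\psm}(x)-\psmfun{z}{\psm}(y)}\leq\psm^\prime(x,y)$, so $\psmfun{z}{\psm}$ is $\psm^\prime$-Lipschitz and hence left uniformly continuous.

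With both hypotheses in hand I would simply invoke Theorem~\ref{th:embedgen}. I do not expect a genuine obstacle, since the conceptual content resides entirely in the general theorem together with the two spike-function lemmas. The one point that requires care is the mirror lemma --- specifically, choosing the auxiliary pseudometric $\psm^\prime(x,y)=\psm(z x^{-1}, z y^{-1})$ and not the naive $\psm(x z^{-1}, y z^{-1})$, because the latter, although it also bounds the increment of $\psmfun{z}{\psm}$, is right invariant and so would only reprove right uniform continuity. This is a short calculation paralleling the proof of Lemma~\ref{lem:spikef}, reflecting the fact that inversion interchanges the left and right uniformities of $G$.
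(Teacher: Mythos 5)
Your proposal is correct and follows exactly the paper's own route: apply Theorem~\ref{th:embedgen} with $\pset_0$ the left invariant and $\pset_1$ the right invariant continuous pseudometrics on $G$, verifying the first spike-function hypothesis by Lemma~\ref{lem:spikef} and the second by left--right symmetry (which you spell out explicitly, and correctly, via $\psm^\prime(x,y)=\psm(zx^{-1},zy^{-1})$). The paper simply says ``by left--right symmetry'' where you write out the mirror computation; there is no substantive difference.
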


\begin{proof}
Apply Theorem~\ref{th:embedgen}
with the set $\pset_0$ of left invariant continuous pseudometrics,
and the set $\pset_1$ of right invariant continuous pseudometrics on $G$.
The assumption in the theorem holds for $\pset_0$ by Lemma~\ref{lem:spikef}
and for $\pset_1$ by left--right symmetry.
\end{proof}

%%%%%%%%%%%%%%%%%%%%%%%%%%%%%%%%%%%%%%%%%%%%%%%%%%%%%%%%%%%%%%%%%%%%%%%%%%

\section{Embedding theorem for quotient spaces}
    \label{sec:embquot}

In this section I apply Theorem~\ref{th:embedgen} to four
uniform structures on the quotient space of a topological group $G$
by a closed subgroup ${H}$.
Under the assumption that ${H}$ is neutral,
this yields an embedding theorem that parallels Theorem~\ref{th:embedgroup}.
It will be seen that the condition of ${H}$ being neutral is also necessary.

The first step, an analogue of Lemma~\ref{lem:spikef},
now has two forms, left and right,
because in $G/{H}$ we no longer have the perfect left--right symmetry.
The simpler of the two is the left version in the next lemma.

\begin{lemma}
    \label{lem:spikefleft}
Let $G$ be a topological group, ${H}$ its closed subgroup,
$\psm$ a left invariant continuous pseudometric on $G/{H}$
and $A=z{H}\iin G/{H}$.
Then $\psmfun{A}{\psm}\iin\Uall(\rightu G/{H})$.
\end{lemma}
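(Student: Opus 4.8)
The plan is to imitate the proof of Lemma~\ref{lem:spikef}: I would manufacture a right invariant continuous pseudometric on $G$ that dominates the increments of $\psmfun{A}{\psm}$, and then push it down to $G/{H}$ through the hyperspace construction that induces $\rightu G/{H}$. To this end, set
\[
\psm^\prime(x,y):=\psm(x^{-1}z{H},\,y^{-1}z{H}),\qquad x,y\iin G,
\]
where on the right $\psm$ is evaluated at the two cosets $x^{-1}z{H}$ and $y^{-1}z{H}$ in $G/{H}$. This $\psm^\prime$ is a pseudometric on $G$; it is continuous, because $x\mapsto q(x^{-1}z)$ is continuous from $G$ into $G/{H}$ and $\psm$ is continuous in the quotient topology; and it is right invariant, since for $g\iin G$ the left invariance of $\psm$ under the action $g\cdot(a{H})=(ga){H}$ gives $\psm^\prime(xg,yg)=\psm(g^{-1}x^{-1}z{H},\,g^{-1}y^{-1}z{H})=\psm^\prime(x,y)$.

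First I would use left invariance to rewrite the function: acting by $x^{-1}$ gives $\psmfun{A}{\psm}(x{H})=\psm(x{H},z{H})=\psm(e_G{H},\,x^{-1}z{H})$. The triangle inequality for $\psm$ then yields, for all $x,y\iin G$,
\[
\abs{\psmfun{A}{\psm}(x{H})-\psmfun{A}{\psm}(y{H})}
=\abs{\psm(e_G{H},x^{-1}z{H})-\psm(e_G{H},y^{-1}z{H})}
\leq \psm(x^{-1}z{H},y^{-1}z{H})=\psm^\prime(x,y).
\]
Because the left-hand side depends only on the cosets $x{H}$ and $y{H}$, this inequality holds for every choice of representatives of those cosets, and taking the infimum over representatives gives $\abs{\psmfun{A}{\psm}(x{H})-\psmfun{A}{\psm}(y{H})}\leq\psm^\prime(x{H},y{H})$, the distance between the two cosets as subsets of $G$. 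By right invariance this subset-distance coincides with the hyperspace pseudometric $\widetilde{\psm^\prime}$ on $G/{H}$ recorded in the Preliminaries, so $\psmfun{A}{\psm}$ is $1$-Lipschitz for $\widetilde{\psm^\prime}$.

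It remains to see that $\widetilde{\psm^\prime}$ yields entourages of $\rightu G/{H}$, and here the only point needing care is boundedness: $\psm$, and hence $\psm^\prime$, need not be bounded, whereas $\rightu G/{H}$ is described through \emph{bounded} right invariant pseudometrics on $G$. I would dispose of this by truncation. For $\varepsilon>0$ the pseudometric $\psm^\prime\wedge\varepsilon$ is bounded, right invariant and continuous on $G$, so $\widetilde{\psm^\prime\wedge\varepsilon}$ belongs to the family inducing $\rightu G/{H}$; and since $\{\widetilde{\psm^\prime}<\varepsilon\}=\{\widetilde{\psm^\prime\wedge\varepsilon}<\varepsilon\}$, this set is a genuine entourage of $\rightu G/{H}$, contained by the Lipschitz estimate in $\{(s,t)\mid\abs{\psmfun{A}{\psm}(s)-\psmfun{A}{\psm}(t)}<\varepsilon\}$. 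This gives $\psmfun{A}{\psm}\iin\Uall(\rightu G/{H})$. I expect this descent from the pseudometric $\psm^\prime$ on $G$ to the coset pseudometric that actually generates $\rightu G/{H}$ — together with the bookkeeping of which pseudometric lives on $G$ and which on $G/{H}$ — to be the only genuine subtlety; the rest runs parallel to Lemma~\ref{lem:spikef}.
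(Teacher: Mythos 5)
Your proof is correct, and its core --- the pseudometric $\psm^\prime(x,y)=\psm(x^{-1}z{H},y^{-1}z{H})$ on $G$, its right invariance and continuity, and the estimate $\abs{\psmfun{A}{\psm}(x{H})-\psmfun{A}{\psm}(y{H})}\leq\psm^\prime(x,y)$ --- is exactly the paper's argument. You part ways only at the final descent to $G/{H}$. The paper reads the estimate as saying that $\psmfun{A}{\psm}\circ q$ is $\psm^\prime$-Lipschitz, hence belongs to $\Uall(\rightu G)$, and then concludes $\psmfun{A}{\psm}\iin\Uall(\rightu G/{H})$ immediately from the defining property of the quotient preuniformity (a function on $G/{H}$ is uniformly continuous for the final structure as soon as its composition with $q$ is); no pseudometric on $G/{H}$ is ever produced. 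Your alternative --- passing to the coset distance $\psm^\prime(x{H},y{H})=\widetilde{\psm^\prime}(x{H},y{H})$ and truncating to meet the boundedness hypothesis in the Roelcke--Dierolf description of $\rightu G/{H}$ --- is also sound: the identity $\widetilde{\rho}(A,B)=\rho(A,B)$ for right invariant $\rho$ and cosets $A,B$ does not require boundedness, and the observation $\{\widetilde{\psm^\prime}<\varepsilon\}=\{\widetilde{\psm^\prime\wedge\varepsilon}<\varepsilon\}$ correctly disposes of the truncation issue. But this re-derives by hand what the universal property of the final uniformity gives for free; what it buys in exchange is an explicit uniformly continuous pseudometric on $G/{H}$ that controls $\psmfun{A}{\psm}$, which the paper's shorter route does not exhibit.
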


\begin{proof}
Let $q\colon G \to G/{H}$ be the quotient mapping.
Define the pseudometric $\psm^\prime$ on $G$ by
\[
\psm^\prime (x,y) := \psm(x^{-1} z{H} , y^{-1} z{H}), \quad x,y\iin G .
\]
Then $\psm^\prime$ is continuous and right invariant,
and for $x,y\iin G$ we have
\[
\abs{ \psmfun{A}{\psm}(x{H}) -  \psmfun{A}{\psm}(y{H})}
= \abs{ \psm( {H}, x^{-1} z{H}) - \psm( {H}, y^{-1} z{H}) }
\leq \psm( x^{-1} z{H}, y^{-1} z{H})
= \psm^\prime (x,y) .
\]
Hence the function $\psmfun{A}{\psm} \circ q$ is in $\Uall(\rightu G)$,
which means that $\psmfun{A}{\psm}\iin\Uall(\rightu G/{H})$.
\end{proof}

The second analogue of Lemma~\ref{lem:spikef} relies on
a modified triangle inequality for $\psmtil$.

\begin{lemma}
    \label{lem:estima}
Let $G$ be a topological group, ${H}$ its closed subgroup,
$\psm$ a bounded right invariant continuous pseudometric on $G$,
and $x,y,z\iin G$.
Then
\[
\abslarge{\psmtil(z{H},x{H}) - \psmtil(z{H},y{H})}
\leq \psmtil (z{H}x^{-1},z{H}y^{-1}) .
\]
\end{lemma}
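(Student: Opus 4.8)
The plan is to first reduce the two Hausdorff distances appearing on the left to distances from the coset $z{H}$ to single points. Since $\psm$ is bounded and right invariant, the identity $\psmtil(A,B)=\psm(A,B)$ for cosets $A,B\iin G/{H}$ (noted in the preliminaries) applies, and right invariance collapses the resulting double infimum: for $h,h'\iin{H}$ one has $\psm(zh,xh')=\psm(zh(h')^{-1},x)$, and as $h,h'$ range over ${H}$ the product $h(h')^{-1}$ ranges over all of ${H}$, so $\psmtil(z{H},x{H})=\psm(z{H},x)$ where $\psm(z{H},x):=\inf_{h\in{H}}\psm(zh,x)$, and likewise with $y$ in place of $x$. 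Thus the lemma is equivalent to the estimate
\[
\abslarge{\psm(z{H},x)-\psm(z{H},y)}\leq\psmtil(z{H}x^{-1},z{H}y^{-1}).
\]

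Because $\psmtil$ is symmetric, it suffices to prove the one-sided inequality $\psm(z{H},x)-\psm(z{H},y)\leq\psmtil(z{H}x^{-1},z{H}y^{-1})$ and then interchange $x$ and $y$. Fix $\varepsilon>0$ and choose $h''\iin{H}$ with $\psm(zh'',y)<\psm(z{H},y)+\varepsilon$. The definition of $\psmtil$ bounds it below by $\sup_{b\in z{H}y^{-1}}\psm(z{H}x^{-1},b)$, so taking $b=zh''y^{-1}\iin z{H}y^{-1}$ gives $\psm(z{H}x^{-1},zh''y^{-1})\leq\psmtil(z{H}x^{-1},z{H}y^{-1})$; hence there is $h'\iin{H}$ with $\psm(zh'x^{-1},zh''y^{-1})<\psmtil(z{H}x^{-1},z{H}y^{-1})+\varepsilon$. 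Right invariance now shifts the two points back into $G$, via $\psm(zh',x)=\psm(zh'x^{-1},e_G)$ and $\psm(zh'',y)=\psm(zh''y^{-1},e_G)$, and the triangle inequality for $\psm$ yields
\[
\psm(z{H},x)\leq\psm(zh'x^{-1},e_G)\leq\psm(zh'x^{-1},zh''y^{-1})+\psm(zh''y^{-1},e_G),
\]
whose right-hand side is below $\psmtil(z{H}x^{-1},z{H}y^{-1})+\psm(z{H},y)+2\varepsilon$. Letting $\varepsilon\to0$ completes the one-sided bound.

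The main thing to get right is the bookkeeping forced by the asymmetric definition of $\psmtil$ as a maximum of two suprema: the near-optimal representative $zh''$ must be chosen on the $y$-side, and the matching representative $zh'$ extracted from the $z{H}x^{-1}$-side, precisely so that after the right-invariant shifts by $x^{-1}$ and $y^{-1}$ the points $zh'x^{-1}$ and $zh''y^{-1}$ land in $z{H}x^{-1}$ and $z{H}y^{-1}$ and the triangle inequality connects them. Boundedness of $\psm$ guarantees $\psmtil(z{H}x^{-1},z{H}y^{-1})$ is finite, so all the estimates are meaningful. No deeper difficulty is expected; once the one-sided inequality is in place, swapping $x$ and $y$ and invoking symmetry of $\psmtil$ delivers the absolute-value statement.
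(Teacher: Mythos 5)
Your proposal is correct and follows essentially the same route as the paper: both reduce $\psmtil$ on cosets to $\psm$ via right invariance, prove the one-sided inequality, and obtain the other side by exchanging $x$ and $y$. Your explicit $\varepsilon$-argument with the representatives $h'$ and $h''$ is just an unpacked version of the paper's $\sup_{t}\inf_{s}$ manipulation, and your observation that only one of the two suprema in the definition of $\psmtil(z{H}x^{-1},z{H}y^{-1})$ is needed for each direction matches the paper's final bound $\sup_{t\in{H}}\psm(z{H}x^{-1},zty^{-1})$.
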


\begin{proof}
Since $\psm$ is right invariant,
$\psmtil(A,B)=\psm(A,B)$ for any two cosets $A,B\iin G/{H}$.
Hence
\begin{align*}
\psmtil(z{H},x{H}) - \psmtil(z{H},y{H})
& =\inf_{s\in {H}} \psm(zsx^{-1},e_G) -\inf_{t\in {H}} \psm(zty^{-1},e_G)\\
& =\sup_{t\in {H}} \inf_{s\in {H}}
                  \left(\psm(zsx^{-1},e_G) - \psm(zty^{-1},e_G) \right) \\
& \leq \sup_{t\in {H}} \inf_{s\in {H}} \;\psm(zsx^{-1}, zty^{-1})       \\
& = \sup_{t\in {H}} \;\psm(z{H}x^{-1}, zty^{-1}) .
\end{align*}
By the same argument with $x$ and $y$ exchanged,
\[
\psmtil(z{H},y{H}) - \psmtil(z{H},x{H})
\leq \sup_{t\in {H}} \;\psm(ztx^{-1}, z{H}y^{-1}).
\qedhere
\]
\end{proof}

The next proposition characterizes neutral subgroups
in terms of pseudometrics.

\begin{proposition}
    \label{prop:neutralequiv}
Let $G$ be a topological group.
The following properties of a closed subgroup ${H}$ of $G$ are equivalent:
\begin{enumerate}
\item[(i)]
${H}$ is neutral.
\item[(ii)]
The topology of the uniform space $\leftu G/{H}$ is the quotient topology
of $G/{H}$.
\item[(iii)]
For every bounded right invariant continuous pseudometric $\psm$ on $G$
and every $z\iin G$,
the pseudometric $\psm^\prime$ defined by
$\psm^\prime(x,y):=\psmtil(z{H}x^{-1},z{H}y^{-1})$, $x,y\iin G$,
is continuous on $G$.
\item[(iv)]
For every bounded right invariant continuous pseudometric $\psm$ on $G$,
the pseudometric $\psm^\prime$ defined by
$\psm^\prime(x,y):=\psmtil({H}x^{-1},{H}y^{-1})$, $x,y\iin G$,
is continuous on $G$.
\item[(v)]
For every bounded right invariant continuous pseudometric $\psm$ on $G$
and every $A\iin G/{H}$,
the function $\psmfun{A}{\psmtil}$ is in $\Uall(\leftu G/{H})$.
\item[(vi)]
For every bounded right invariant continuous pseudometric $\psm$ on $G$,
the function $\psmfun{{H}}{\psmtil}$ is in $\Uall(\leftu G/{H})$.
\end{enumerate}
\end{proposition}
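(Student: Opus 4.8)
The plan is to prove the cycle of implications $\text{(i)}\Rightarrow\text{(iv)}\Rightarrow\text{(iii)}\Rightarrow\text{(v)}\Rightarrow\text{(vi)}\Rightarrow\text{(i)}$, which at once gives the equivalence of (i), (iii), (iv), (v) and (vi), and then to attach (ii) by proving $\text{(i)}\Rightarrow\text{(ii)}$ and $\text{(ii)}\Rightarrow\text{(i)}$. Two edges of the cycle are routine and I would clear them first. For $\text{(iv)}\Rightarrow\text{(iii)}$, given $\psm$ and $z$, I would introduce the bounded right invariant continuous pseudometric $\rho(a,b):=\psm(za,zb)$ and note that its Hausdorff extension satisfies $\widetilde{\rho}({H}x^{-1},{H}y^{-1})=\psmtil(z{H}x^{-1},z{H}y^{-1})$, so that (iv) applied to $\rho$ is exactly (iii) for $\psm$ and $z$; the edge $\text{(v)}\Rightarrow\text{(vi)}$ is the special case $A={H}$.

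For $\text{(iii)}\Rightarrow\text{(v)}$ I would fix $A=z{H}$ and put $\psm'(x,y):=\psmtil(z{H}x^{-1},z{H}y^{-1})$. Since $\psm$, and hence $\psmtil$, is right invariant, $\psm'$ is \emph{left} invariant; by (iii) it is also continuous, so $\psm'\iin\up(\leftu G)$. Lemma~\ref{lem:estima} then gives $\abs{\psmfun{A}{\psmtil}(x{H})-\psmfun{A}{\psmtil}(y{H})}\le\psm'(x,y)$, so $\psmfun{A}{\psmtil}\circ q$ is $\psm'$-Lipschitz and lies in $\Uall(\leftu G)$; by the universal property of the quotient preuniformity this means $\psmfun{A}{\psmtil}\iin\Uall(\leftu G/{H})$, which is (v), exactly as in Lemma~\ref{lem:spikefleft}.

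The remaining two edges carry the real content, and I expect $\text{(i)}\Rightarrow\text{(iv)}$ to be the main obstacle. Here $\psm'(x,y)=\psmtil({H}x^{-1},{H}y^{-1})$ is again left invariant, so its continuity reduces to continuity at $e_G$, i.e. to $\psmtil({H},{H}v^{-1})\to 0$ as $v\to e_G$. Expanding the Hausdorff pseudometric and using right invariance, $\psmtil({H},{H}v^{-1})$ is the maximum of two one-sided suprema of the form $\sup_{s\in {H}}\psm(sv,{H})$, one with $v$ and one with $v^{-1}$. Given $\varepsilon>0$, I would apply neutrality to $U:=\{g\mid\psm(g,e_G)<\varepsilon\}$ to obtain a neighbourhood $V$ of $e_G$ with ${H}V\subseteq U{H}$; then for $s\in {H}$ and $v\in V\cap V^{-1}$ one writes $sv=uh$ with $u\in U$, $h\in {H}$, and reads off $\psm(sv,{H})\le\psm(sv,h)=\psm(u,e_G)<\varepsilon$. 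Handling the two suprema and the left/right asymmetry through the precise form ${H}V\subseteq U{H}$ of neutrality is the delicate point. For $\text{(vi)}\Rightarrow\text{(i)}$ I would run the same computation backwards: here $\psmfun{{H}}{\psmtil}(x{H})=\psm(x,{H})$, so (vi) says $x\mapsto\psm(x,{H})$ is left uniformly continuous for every bounded right invariant continuous $\psm$. Given a neighbourhood $U$ of $e_G$, there is such a $\psm$ and an $\varepsilon>0$ with $\{g\mid\psm(g,e_G)<\varepsilon\}\subseteq U$ (the right uniformity of $G$ induces its topology); left uniform continuity yields a neighbourhood $V$ of $e_G$ with $\psm(sv,{H})<\varepsilon$ for all $s\in {H}$, $v\in V$, and picking $h\in {H}$ with $\psm(svh^{-1},e_G)<\varepsilon$ gives $sv\in U{H}$, that is ${H}V\subseteq U{H}$.

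Finally I would attach (ii). The implication $\text{(i)}\Rightarrow\text{(ii)}$ is the cited~\cite[5.28]{Roelcke1981ust}. For $\text{(ii)}\Rightarrow\text{(i)}$ the key observation is that a left invariant pseudometric $\delta$ on $G/{H}$ is invariant under the left action of ${H}$, since $\delta(hC,{H})=\delta(hC,h{H})=\delta(C,{H})$ for $h\in {H}$; hence every ball $\{C\mid\delta(C,{H})<\varepsilon\}$ is ${H}$-invariant. Assuming (ii), the quotient-open set $q(U)$ is open in $\leftu G/{H}$ and so contains such a ball $B\ni {H}$; as $B$ is then also quotient-open and $q$ is open, $q^{-1}(B)$ contains a neighbourhood $V$ of $e_G$ with $q(V)\subseteq B$, whence ${H}\cdot q(V)\subseteq {H}B=B\subseteq q(U)$, which is precisely ${H}V\subseteq U{H}$. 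This closes the equivalence.
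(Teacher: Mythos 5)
Your proposal is correct, and its core is the same as the paper's: left invariance of the auxiliary pseudometrics reduces continuity to continuity at $e_G$; neutrality applied to a $\psm$-ball controls $\sup_{s\in H}\psm(sv,H)$; Lemma~\ref{lem:estima} carries (iii)/(iv) over to (v)/(vi); and (vi)$\Rightarrow$(i) is read off from the uniform continuity of $xH\mapsto\psm(H,x)$ exactly as in the paper. Two points of genuine divergence are worth recording. First, the paper proves (i)$\Rightarrow$(iii) directly for arbitrary $z$, using the set $U=\{y\in G\mid\psm(zH,zy)<\varepsilon\}$, which satisfies $UH=U$ so that neutrality yields $HV\subseteq UH=U$ and no factorization is needed; you instead prove only the case $z=e_G$, i.e.\ (i)$\Rightarrow$(iv), and recover (iii) from (iv) by the substitution $\rho(a,b):=\psm(za,zb)$, noting $\widetilde{\rho}(Hx^{-1},Hy^{-1})=\psmtil(zHx^{-1},zHy^{-1})$. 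That reduction is valid and disposes of the parameter $z$ rather cleanly, at the price of the slightly more delicate step of factoring $sv=uh$ from $HV\subseteq UH$ (your two-sided supremum formula for $\psmtil(H,Hv^{-1})$, with one term in $v$ and one in $v^{-1}$, does check out, using symmetry and right invariance of $\psm$). Second, you give a direct proof of (ii)$\Rightarrow$(i) via the $H$-invariance of balls of left invariant pseudometrics on $G/{H}$, combined with the fact that such pseudometrics induce the preuniformity of $\leftu G/{H}$ (Roelcke--Dierolf 7.7); the paper simply cites \cite[5.28]{Roelcke1981ust} for the whole equivalence (i)$\Leftrightarrow$(ii). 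Your argument there is correct and makes the proposition more self-contained, though you still rely on the citation for (i)$\Rightarrow$(ii).
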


\begin{proof}
(i)$\Leftrightarrow$(ii) by~\cite[5.28]{Roelcke1981ust}.

To prove (i)$\Rightarrow$(iii), assume (i) and let $\psm$ be
any bounded right invariant continuous pseudometric on $G$.
Let $\psm^\prime(x,y):=\psmtil(z{H}x^{-1},z{H}y^{-1})$, $x,y\iin G$,
and take any $\varepsilon>0$.
As $\psm$ is right invariant,
\[
\psm^\prime(x,e_G)
= \max\left( \;\sup_{t\in {H}} \;\psm(z{H},ztx), \;
             \sup_{t\in {H}} \;\psm(z{H},ztx^{-1}) \;
      \right)
\]

The set $U:=\{y\iin G \mid \psm(z{H},zy) < \varepsilon \}$ is
a neighbourhood of $e_G$ and $U{H}=U$.
As ${H}$ is neutral, there is a neighbourhood $V$ of $e_G$
such that ${H}V \subseteq U$.
I claim that $\psm^\prime(x,e_G)\leq\varepsilon$
for every $x\iin V\cap V^{-1}$.

If $x\iin V$ and $t\iin {H}$ then $tx \iin U$,
hence $\psm(z{H},ztx) < \varepsilon$.
It follows that
$
\sup_{t\in {H}} \psm(z{H},ztx)  \leq \varepsilon
$.
If $x^{-1}\iin V$ then
$\sup_{t\in {H}} \psm(z{H},ztx^{-1})  \leq \varepsilon$
by the same argument.
That proves the claim.
Thus the function $x\mapsto \psm^\prime(x,e_G)$ is continuous.
Since the pseudometric $\psm^\prime$ is left invariant,
it follows that it is continuous.

Obviously (iii)$\Rightarrow$(iv) and (v)$\Rightarrow$(vi).
To prove (iii)$\Rightarrow$(v) and (iv)$\Rightarrow$(vi),
note that the pseudometric $\psm^\prime$ in (iii) or (iv)
is left invariant and apply Lemma~\ref{lem:estima}.

To prove (vi)$\Rightarrow$(i),
assume (vi) and take any neighbourhood $U$ of $e_G$.
There is a bounded right invariant continuous pseudometric $\psm$
such that $\{x\iin G \mid \psm(e_G,x) < 1 \}\subseteq U$.
By the assumption there is a left invariant continuous pseudometric
$\psm^\prime$ on $G/{H}$ such that if $\psm^\prime({H},x{H})<1$
then $\psmtil({H},x{H})<1$.
The set $V:=\{x\iin G \mid \psm^\prime({H},x{H}) < 1 \}$
is a neighbourhood of $e_G$ in $G$ and ${H} V = V$.
Since $\psmtil({H},x{H})=\psm({H}, x)$, we get
\[
{H} V = V
= \{x\iin G \mid \psm^\prime({H},x{H}) < 1 \}
\subseteq \{x\iin G \mid \psm({H},x) < 1 \}
\subseteq U{H} .
\qedhere
\]
\end{proof}

We are ready for the embedding theorem for quotient spaces.
In the theorem, ${H}$ is a closed neutral subgroup of a topological group $G$.
The identity mapping $\iota\colon G/{H} \to G/{H}$ again plays five
different roles as a uniformly continuous mapping.
In the following diagram I write
$Q_L=\leftu G/{H}$,
$Q_R=\rightu G/{H}$,
$Q_\vee=(\leftu G/{H})\vee (\rightu G/{H})$,
and $Q_\wedge=(\leftu G/{H})\wedge (\rightu G/{H})$.

From the definition it immediately follows that
$\upu G/{H}$ is finer than $Q_\vee$
and $Q_\wedge$ is finer than $\lowu G/{H}$.
In fact, $Q_\wedge=\lowu G/{H}$~\cite[5.31(c)]{Roelcke1981ust}.
On the other hand, $\upu G/{H}$ may be strictly finer
than $Q_\vee$~\cite[6.22]{Roelcke1981ust}.

\begin{picture}(200,120)(-80,20)
\thicklines

%left part
\put(-10,110){\makebox(30,20){${Q_\vee}$}}
\put(-80,70){\makebox(30,20){${Q_L}$}}
\put(60,70){\makebox(30,20){${Q_R}$}}
\put(-10,30){\makebox(30,20){${Q_\wedge}$}}

\put(-10,115){\vector(-2,-1){50}}
\put(20,115){\vector(2,-1){50}}
\put(70,70){\vector(-2,-1){50}}
\put(-60,70){\vector(2,-1){50}}
\put(3,110){\vector(0,-1){60}}

\put(-55,100){\makebox(30,20){$\initL$}}
\put(35,100){\makebox(30,20){$\initR$}}
\put(-55,43){\makebox(30,20){$\finL$}}
\put(35,40){\makebox(30,20){$\finR$}}
\put(0,70){\makebox(30,20){$\initfin$}}

%right part
\put(190,112){\makebox(30,20){$\UMeas(Q_\vee)$}}
\put(120,70){\makebox(30,20){$\UMeas(Q_L)$}}
\put(270,70){\makebox(30,20){$\UMeas(Q_R)$}}
\put(190,26){\makebox(30,20){$\UMeas(Q_\wedge)$}}

\put(190,115){\vector(-2,-1){50}}
\put(220,115){\vector(2,-1){50}}
\put(270,70){\vector(-2,-1){50}}
\put(140,70){\vector(2,-1){50}}
\put(203,110){\vector(0,-1){60}}

\put(140,98){\makebox(30,20){$\extm{\initL}$}}
\put(240,98){\makebox(30,20){$\extm{\initR}$}}
\put(140,43){\makebox(30,20){$\extm{\finL}$}}
\put(240,43){\makebox(30,20){$\extm{\finR}$}}
\put(200,70){\makebox(30,20){$\extm{\initfin}$}}
\end{picture}

\begin{theorem}
    \label{th:embedquot}
Let $G$ be a topological group,
and ${H}$ a closed neutral subgroup of $G$.
Write
$Q_L=\leftu G/{H}$,
$Q_R=\rightu G/{H}$,
$Q_\vee=(\leftu G/{H})\vee (\rightu G/{H})$,
and $Q_\wedge=(\leftu G/{H})\wedge (\rightu G/{H})$.
For $\alpha\beta \iin \{\vee L, \vee R,
L\wedge, R\wedge, \vee\wedge\}$
let $\iota_{\alpha\beta}\colon Q_\alpha \to Q_\beta$
be the uniformly continuous identity mapping.

Then for every
$\alpha\beta \iin \{\vee L,\vee R,
L\wedge,R\wedge,\vee\wedge\}$
the mapping
$\extm{\iota_{\alpha\beta}}=\UMeas(\iota_{\alpha\beta})$ from
$\UMeas(Q_\alpha)$ to $\UMeas(Q_\beta)$ is injective
and its restriction to the positive cone $\UMeas(Q_\alpha)^+$
with its $\ueb$ topology
is a topological embedding into $\UMeas(Q_\beta)^+$
with its $\ueb$ topology.
Moreover,
\begin{equation*}
    \label{eq:coinquot}
\extm{\initfin}(\UMeas(Q_\vee))
= \extm{\finL}(\UMeas(Q_L))
\,\cap\, \extm{\finR}(\UMeas(Q_R)) .
\end{equation*}
\end{theorem}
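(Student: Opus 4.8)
The plan is to obtain Theorem~\ref{th:embedquot} as a direct application of the general embedding theorem, Theorem~\ref{th:embedgen}, in exactly the way that Theorem~\ref{th:embedgroup} was obtained for the group case. All of the substantive work has already been packaged into Lemma~\ref{lem:spikefleft} and Proposition~\ref{prop:neutralequiv}; what remains is to choose the right two sets of pseudometrics on the underlying set $S=G/{H}$ and to verify the two cross-continuity hypotheses of Theorem~\ref{th:embedgen}.

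First I would set $\pset_0$ to be the set of all left invariant continuous pseudometrics on $G/{H}$, and $\pset_1$ to be the set of all restrictions to $G/{H}$ of the pseudometrics $\psmtil$, where $\psm$ ranges over the bounded right invariant continuous pseudometrics on $G$. By the two Roelcke--Dierolf propositions quoted in the preliminaries, $\pset_0$ induces the preuniformity of $Q_L=\leftu G/{H}$ and $\pset_1$ induces the uniformity of $Q_R=\rightu G/{H}$; consequently the induced supremum and infimum are precisely $Q_\vee$ and $Q_\wedge$, so the abstract spaces $X_0,X_1,X_\vee,X_\wedge$ of Theorem~\ref{th:embedgen} coincide with $Q_L,Q_R,Q_\vee,Q_\wedge$. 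To be inside the hypotheses of Theorem~\ref{th:embedgen} I must check that both $\pset_0$ and $\pset_1$ separate the points of $G/{H}$. For $\pset_1$ this is automatic, since $Q_R$ is Hausdorff whenever $G$ is Hausdorff and ${H}$ is closed. For $\pset_0$ it is exactly here that the neutrality of ${H}$ first enters: as noted in the preliminaries, $\leftu G/{H}$ is Hausdorff precisely when ${H}$ is neutral, so $\pset_0$ separates points.

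It then remains to verify the two cross-continuity hypotheses. The requirement that $\psmfun{A}{\psm}\iin\Uall(Q_R)$ for every $\psm\iin\pset_0$ and every $A\iin G/{H}$ is exactly the statement of Lemma~\ref{lem:spikefleft}. The requirement that $\psmfun{A}{\psm}\iin\Uall(Q_L)$ for every $\psm\iin\pset_1$ and every $A\iin G/{H}$ says, once one recalls that such a $\psm$ is of the form $\psmtil$ for a bounded right invariant continuous pseudometric on $G$, that $\psmfun{A}{\psmtil}\iin\Uall(\leftu G/{H})$; this is precisely condition (v) of Proposition~\ref{prop:neutralequiv}, which holds because ${H}$ is neutral. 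With both hypotheses established, Theorem~\ref{th:embedgen} delivers all five injectivity statements, the topological embedding of each positive cone, and the intersection identity of the theorem, completing the proof.

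I do not expect a genuine obstacle at this final stage, because the two preceding results carry the analytic weight. The only points demanding care are bookkeeping: matching the abstract $X_\bullet$ with the concrete $Q_\bullet$, and tracking where the neutrality assumption is consumed. Neutrality enters in exactly two places---the point-separation of $\pset_0$ (equivalently, Hausdorffness of $\leftu G/{H}$) and condition (v) of Proposition~\ref{prop:neutralequiv}---and these are the same places that make neutrality necessary: were ${H}$ not neutral, $\leftu G/{H}$ would fail to be Hausdorff and the construction could not even be started.
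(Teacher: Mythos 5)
Your proposal is correct and follows essentially the same route as the paper: apply Theorem~\ref{th:embedgen} with $\pset_0$ the left invariant continuous pseudometrics on $G/{H}$ and $\pset_1$ the pseudometrics $\psmtil$ restricted to $G/{H}$, citing Roelcke--Dierolf 7.7 and 7.6 for the induced uniformities and Lemma~\ref{lem:spikefleft} together with Proposition~\ref{prop:neutralequiv} for the cross-continuity hypotheses. The only slight overstatement is your closing claim that $\leftu G/{H}$ is Hausdorff \emph{precisely} when ${H}$ is neutral; the paper's necessity argument instead uses the equivalence of neutrality with the topology of $\leftu G/{H}$ being the quotient topology, but this does not affect the validity of your proof.
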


\begin{proof}
To apply Theorem~\ref{th:embedgen},
let $\pset_0$ be the set of left invariant continuous pseudometrics
on $G/{H}$.
Let $\pset_1$ be the set of the pseudometrics $\psmtil$ on $G/{H}$
where $\psm$ is a bounded right invariant continuous pseudometric on $G$.

By~\cite[7.7]{Roelcke1981ust},
$\pset_0$ induces $Q_L=\leftu G/{H}$.
By~\cite[7.6]{Roelcke1981ust},
$\pset_1$ induces $Q_R=\rightu G/{H}$.
The assumption in Theorem~\ref{th:embedgen} holds
by Lemma~\ref{lem:spikefleft} and Proposition~\ref{prop:neutralequiv}.
\end{proof}

If the natural mapping from $\UMeas(Q_\vee)$ to $\UMeas(Q_L)$
is a topological embedding then in particular the topology
of $Q_L=\leftu G/{H}$ is the same as the topology of $Q_\vee$
which is the quotient topology of $G/{H}$.
Thus, by (i)$\Leftrightarrow$(ii) in~\ref{prop:neutralequiv},
${H}$ being a neutral subgroup of $G$ is not only sufficient
but also necessary for the conclusion of Theorem~\ref{th:embedquot}.
For the same reason it is also a necessary condition for the conclusion
of the Roelcke--Dierolf embedding theorem~\cite[10.9]{Roelcke1981ust}.

%%%%%%%%%%%%%%%%%%%%%%%%%%%%%%%%%%%%%%%%%%%%%%%%%%%%%%%%%%%%%%%%%%%%%%%%%%

\section{Counterexample}
    \label{sec:freeunif}

For a uniform space $X$,
an ``unbounded version'' of $\UMeas(X)$ is the space $\FMeas(X)$ of
\emph{free uniform measures}~\cite[Ch.10]{Pachl2013usm}.
The topology of $\FMeas(X)$ is that of uniform convergence on
uniformly equicontinuous pointwise bounded sets of functions,
called the \emph{$\ue$ topology}.
As a linear space,
$\FMeas(X)$ naturally identifies with a subspace of $\UMeas(X)$,
but the $\ue$ topology on $\FMeas(X)$ is in general finer
that the $\ueb$ topology that $\FMeas(X)$ inherits from $\UMeas(X)$.
The two topologies agree on $\FMeas(X)$ if and only if
$\Uall(X)=\Ub(X)$, and in that case $\FMeas(X)=\UMeas(X)$.

The following example shows that,
in spite of similarities between the spaces $\UMeas(X)$ and $\FMeas(X)$,
the embedding theorems proved here for $\UMeas$ do not hold for $\FMeas$.
When the spaces $\UMeas(\cdot)$ are replaced by $\FMeas(\cdot)$
in Theorems~\ref{th:embedgroup} and~\ref{th:embedquot},
the mappings $\FMeas(\iota_{\alpha\beta})$ need not be topological embeddings.

Let $G$ be a topological group such that there is an unbounded
function $f\iin\Uall(\rightu G)$ but all functions
in $\Uall(\lowu G)$ are bounded.
For example, we may take $G$ to be the infinite symmetric group
with the topology of pointwise convergence,
which satisfies $\Uall(\lowu G)=\Ub(\lowu G)$
because $\lowu G$ is precompact~\cite[9.14]{Roelcke1981ust}.

Take points $x_j \iin G$ such that $f(x_j)\geq j$ for $j=1,2,\dots$.
Define $\mmeas_j := \frac{1}{f(x_j)} \, \pmass(x_j)$, $j=1,2,\dots$.
Then $\lim_j \mmeas_j = 0$ in the $\ue$ topology on $\FMeas(\lowu G)^+$.
On the other hand, $\mmeas_j$ do not converge to $0$ in the $\ue$ topology
on $\FMeas(\rightu G)^+$ because $\mmeas_j(f)=1$ for every $j$.
Thus the natural mapping
$\FMeas(\rightu G)^+ \to \FMeas(\lowu G)^+$ is not a topological embedding.

%%%%%%%%%%%%%%%%%%%%%%%%%%%%%%%%%%%%%%%%%%%%%%%%%%%%%%%%%%%%%%%%%%%%%%%%

\end{document}